\newtheorem{thm}{Theorem}[section]
\newtheorem{Con}[thm]{Conjecture}
\newtheorem{cor}[thm]{Corollary}
\newtheorem{lem}[thm]{Lemma}
\newtheorem{pro}[thm]{Proposition}
\theoremstyle{definition}
\numberwithin{equation}{section}
\newcommand{\ep}{\varepsilon_d}
\newcommand{\re}{\textup{Re}}
\newcommand{\im}{\textup{Im}}
\newcommand{\D}{\mathcal{D}}
\newcommand{\N}{\mathcal{N}}
\newcommand{\sums}{\sum_{\substack{d\in \D\\ \ep \leq x}}}
\newcommand{\sumf}{\sideset{}{^\flat}\sum}
\begin{document}

\baselineskip=17pt

\title[Large moments and extreme values of class numbers]
{Large moments and extreme values of class numbers of indefinite binary quadratic forms}

\author{Youness Lamzouri}

\address{Department of Mathematics and Statistics,
York University,
4700 Keele Street,
Toronto, ON,
M3J1P3
Canada}

\email{lamzouri@mathstat.yorku.ca}

\date{}

\begin{abstract}  Let $h(d)$ be the class number of indefinite binary quadratic forms of discriminant $d$, and let $\ep$ be the corresponding fundamental unit. In this paper, we obtain an asymptotic formula for the $k$-th moment of $h(d)$ over positive discriminants $d$ with $\ep\leq x$, uniformly for real numbers $k$ in the range $0<k\leq (\log x)^{1-o(1)}$. This improves upon the work of Raulf, who obtained such an asymptotic for a fixed positive integer $k$. We also investigate the distribution of large values of $h(d)$ when the $d$'s are ordered according to the size of their fundamental units $\ep$. In particular, we show that the tail of this distribution has the same shape as that of class numbers of imaginary quadratic fields ordered by the size of their discriminants. As an application of these results, we prove that there are many positive discriminants $d$ with class number $h(d)\geq (e^{\gamma}/3+o(1))\cdot \ep  (\log\log \ep)/\log \ep$, a bound that we believe is best possible. We also obtain an upper bound for $h(d)$ that is twice as large, assuming the generalized Riemann hypothesis.

\end{abstract}

\subjclass[2010]{Primary 11E41}

\thanks{ The author is partially supported by a Discovery Grant from the Natural Sciences and Engineering Research Council of Canada.}

\maketitle

\section{Introduction}

Let $\D=\{d\in \mathbb{N}: d\equiv 0 \text{ or } 1\pmod 4, \text{ and } d \text{ is not a square}\}$ be the set of positive discriminants. For $d\in \D$ let $h(d)$ be the number of equivalence classes of primitive indefinite binary quadratic forms $ax^2+bxy+cy^2$ of discriminant $d=b^2-4ac$. The study of the class numbers $h(d)$ is an important problem in number theory, and its rich history goes back to the work of Gauss. In particular, in his Disquistiones Arithmeticae, Gauss 
conjectured that there are infinitely many $d\in \D$ for which $h(d)=1$, a problem that is still open. The difference between this case and the easier one of positive definite forms (which corresponds to negative discriminants) is the role played by the fundamental unit $\ep$  when $d>0$. Recall that $\varepsilon_d= (t_d+u_d\sqrt{d})/2,$
where $t_d$ and $u_d$ are the smallest positive integer solutions to the Pell equation $t^2-du^2=4$. Indeed for $d\in \D$, Dirichlet's class number formula asserts that 
\begin{equation}\label{ClassNumberFormula}
h(d)=\sqrt{d}\cdot \frac{L(1,\chi_d)}{\log \ep},
\end{equation}
where $\chi_d=\left(\frac{d}{\cdot}\right)$ is the Kronecker symbol, and $L(s,\chi_d)$ is the Dirichlet $L$-function associated to $\chi_d$. Gauss \cite{Ga} asserted without proof that
$$\sum_{\substack{ d\in \D\\ d\leq x}} h(d)\log \ep\sim\frac{\pi^2}{18 \zeta(3)} x^{3/2},$$
an asymptotic formula that was later proved by Siegel \cite{Si}. However, Gauss also observed that unlike the quantity $h(d)\log \ep$, the class number $h(d)$  appears to behave rather erratically on average. In fact, for almost two centuries following Gauss's Disquistiones Arithmeticae no asymptotic formula was proposed, let alone proved, for the average of $h(d)$ over $d$, until Hooley \cite{Ho} formulated a conjecture for this average.  Using a refined analysis of the behaviour of the fundamental units $\ep$, Hooley \cite{Ho} conjectured that
$$\sum_{\substack{ d\in \D\\ d\leq x}} h(d)\sim \frac{25}{12\pi^2}x(\log x)^2.$$
Proving such an asymptotic appears to be one of the most difficult problems in this theory. The main difficulty comes from the erratic behaviour of the fundamental unit $\ep$ which can be as small as $\sqrt{d}$ but might as well be as large as $\exp(\sqrt{d})$. 

A few years prior to Hooley's work, Sarnak \cite{Sa1} observed that this problem can be solved if one orders the discriminants $d$ instead by the size of their fundamental units $\ep$. This ordering makes the problem easier, since we are favouring those $d$ with small $\ep$. Using the Selberg trace formula, Sarnak proved that
$$\sums h(d) =\text{Li}(x^2) + O\left(x^{3/2+\epsilon}\right),$$
and he also showed that
\begin{equation}\label{SarnakAsymp}
 \sums 1 \sim \frac{35}{16} x,
 \end{equation}
 and hence it follows that the average order of $h(d)$ over the discriminants $d\in \D$ with $\ep\leq x$ equals $8x/(35\log x)$.

Based on a subsequent work of Sarnak \cite{Sa2}, Raulf \cite{Ra} obtained a similar asymptotic for the average of $h(d)$ over fundamental discriminants $d$. Her approach avoids the use of the Selberg trace formula at the cost of a weaker error term. Later in \cite{Ra2}, Raulf generalized her method to obtain an asymptotic formula for the $k$-th moment of $h(d)$, for any fixed natural number $k$.  We improve on this result, by obtaining an asymptotic formula for the $k$-th moment of $h(d)$ over positive discriminants $d$ with $\ep\leq x$, uniformly for all real numbers $k$ in the range $0<k\leq (\log x)^{1-o(1)}$. Our approach is different, and relies on the methods of Granville and Soundararajan \cite{GrSo} and the author (\cite{La0} and \cite{La1}) for computing large moments of $L(1,\chi_d)$. Here and throughout we let $\log_j$ be the $j$-fold iterated logarithm; that is, $\log_2=\log\log$, $\log_3=\log\log\log$ and so on.
\begin{thm}\label{Main} Let $x$ be large. There exists a positive constant $B$ such that uniformly for all real numbers $k$ with $0<k \leq \log x/(B\log_2 x\log_3 x)$ we have 
$$\sums h(d)^k= \mathcal{H}(k)\cdot \int_{2}^{x} \left(\frac{t}{\log t}\right)^k dt + O\left(\frac{x^{k+1}}{(\log x)^k}\exp\left(-\frac{\log x}{200\log_2 x}\right)\right),$$
where the constant $\mathcal{H}(k)$ is defined in \eqref{TheH} below. Moreover,  we have
$$\mathcal{H}(k)= \prod_{p}\mathcal{H}_p(k),$$
where $\mathcal{H}_2(k)=\frac12+ O\left(\frac{2^k}{3^k}\right),$
and for $p\geq 3$ we have
$$ \mathcal{H}_p(k)=\left(\left(\frac12-\frac{3}{2p}\right)\left(1-\frac{1}{p}\right)^{-k}+ \left(\frac12-\frac{1}{2p}\right)\left(1+\frac{1}{p}\right)^{-k}+\frac{2}{p}\right)\left(1+ O\left(\frac{1}{(p-1)^k}\right)\right).
$$
\end{thm}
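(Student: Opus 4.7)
The plan is to pass through Dirichlet's class number formula and then adapt the large-moments machinery for $L(1,\chi_d)$ developed in \cite{GrSo,La0,La1} to the setting where discriminants are ordered by their fundamental units. By \eqref{ClassNumberFormula}, I would first rewrite the target moment as
$$\sums h(d)^k=\sums \frac{d^{k/2}L(1,\chi_d)^k}{(\log\ep)^k},$$
thereby reducing the problem to computing an asymptotic for this weighted character sum.

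Choose a parameter $y=(\log x)^A$ with $A$ large, and write $L(1,\chi_d;y)=\prod_{p\le y}(1-\chi_d(p)/p)^{-1}$ for the truncated Euler product. Following \cite{GrSo,La0}, I would first show via standard zero-density and upper-bound arguments that
$$L(1,\chi_d)^k=L(1,\chi_d;y)^k\Big(1+O\!\big(e^{-c\log x/\log_2 x}\big)\Big)$$
for every $d$ with $\ep\le x$ outside an exceptional set $\mathcal{E}$ of very small size. The contribution of $\mathcal{E}$ to the $k$-th moment would be controlled via the unconditional pointwise bound $L(1,\chi_d)\ll(\log d)(\log_2 d)$ combined with Sarnak's count \eqref{SarnakAsymp} and the smallness of $\mathcal{E}$, and placed inside the stated error term.

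Next I would expand
$$L(1,\chi_d;y)^k=\sum_{n\in S(y)}\frac{g_k(n)\chi_d(n)}{n},\qquad g_k(p^j)=\binom{k+j-1}{j},$$
where $S(y)$ denotes $y$-smooth integers, and establish the key distributional input: for every $n\in S(y)$ up to a suitable cutoff,
$$\sums \frac{d^{k/2}\chi_d(n)}{(\log\ep)^k}=\mathcal{H}(k;n)\int_2^x\!\Big(\frac{t}{\log t}\Big)^k dt+E(x,n,k)$$
with $\mathcal{H}(k;n)$ multiplicative in $n$. To prove this I would parameterize $d$ by pairs $(t,u)$ solving $t^2-du^2=4$, noting that $\ep\le x$ forces $t$ to lie in a bounded range and that the dominant contribution comes from small $u$ (so $\sqrt{d}\asymp 2\ep$); summing the Kronecker symbol $\chi_d(n)=\big(\frac{(t^2-4)/u^2}{n}\big)$ over $t$ in dyadic ranges via Polya--Vinogradov or a large sieve would yield the required asymptotic. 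Assembling the products, the local densities $\tfrac12-\tfrac{3}{2p}$, $\tfrac12-\tfrac{1}{2p}$, and $\tfrac{2}{p}$ for $\chi_d(p)=+1,-1,0$ respectively (when $d$ is ordered by $\ep$, as is implicit in \cite{Sa2,Ra,Ra2}) reassemble the Euler product $\mathcal{H}(k)=\prod_p \mathcal{H}_p(k)$ with the precise local factors claimed.

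The hard part will be maintaining the uniformity in $k$ up to $\log x/(B\log_2 x\log_3 x)$. The Dirichlet series over $n\in S(y)$ is effectively supported on $n\le\exp(k\log y)$, so both the character-sum error $E(x,n,k)$ and the tail of the series beyond this support must be shown to lose at most a factor of size $\exp(-\log x/(200\log_2 x))$. This forces a delicate balance between the choice of $y$, the cutoff on $n$, and the size of $\mathcal{E}$, and is precisely what determines the admissible range of $k$; this quantitative optimization, analogous to the one carried out for moments of $L(1,\chi_d)$ in \cite{La1}, is where I would expect the bulk of the technical work to lie.
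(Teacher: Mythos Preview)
Your overall architecture---class number formula, replace $L(1,\chi_d)^k$ by a finite approximation on a zero-free majority, expand and swap sums, evaluate $\sums \chi_d(n)d^{k/2}$ via the $(t,u)$-parametrization---matches the paper. But the specific parameter choice $y=(\log x)^A$ is a genuine gap that breaks the argument in the claimed range of $k$.

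The issue is quantitative. For $k$ as large as $\log x/(B\log_2 x\log_3 x)$, an Euler product truncated at $y=(\log x)^A$ does \emph{not} approximate $L(1,\chi_d)^k$ to within $1+O(e^{-c\log x/\log_2 x})$: even under GRH, the single-$L$ error at height $(\log d)^A$ is only $1+O(\log_3 d/\log_2 d)$, and raising this to the $k$-th power produces a factor of size $\exp\big(O(\log x/(\log_2 x)^2)\big)$, which is unbounded, not $1+o(1)$. The paper instead takes $y=x^{1/60}$, a small \emph{power} of $x$, and invokes a sharp approximation lemma (Proposition~3.3 of \cite{DaLa}, restated here as Proposition~\ref{ShortApproxL}) valid under a zero-free rectangle of width $1/10$ and height a power of $\log x$: this gives
\[
L(1,\chi_d)^k=\sum_{n\ge1}\frac{d_k(n)\chi_d(n)}{n}e^{-n/y}+O\!\left(e^{-\log y/(2\log_2 d)}\right)
\]
precisely when $k\le \log y/(4\log_2 d\log_3 d)$, which forces $\log y\asymp\log x$ to reach the stated range of $k$. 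The exceptional set is then handled by a zero-density estimate (Heath-Brown), transported to the $\ep$-ordering via the $(t,u)$-parametrization.

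This choice of $y$ has a second consequence for your plan: the smoothed sum is effectively supported on $n\le y=x^{1/60}$, so the character-sum input needed is an asymptotic for $\sums \chi_d(n)d^{k/2}$ with error $O(n\cdot x^{k+49/50})$, usable exactly for such $n$. By contrast, with your $y=(\log x)^A$ the smooth-number expansion is supported up to $\exp(k\log y)=\exp\big(A\log x/\log_3 x\big)$, far beyond any power of $x$, and no Polya--Vinogradov or large-sieve bound will make the tail negligible against the main term. In the paper these character sums are not merely bounded but computed \emph{exactly} (Lemma~\ref{CharacterSum}, building on \cite{Ra}), which is what produces the precise local factors $\mathcal{H}_p(k)$; an oscillation bound alone would not recover the Euler product. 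Finally, the $(\log\ep)^{-k}$ weight is not carried through the character-sum asymptotic as you propose; the paper first proves an asymptotic for $\sums L(1,\chi_d)^k d^{k/2}$ (Theorem~\ref{UnweightedMoments}) and recovers the $(\log\ep)^{-k}$ by partial summation at the end.
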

Note that 
$$ 
\int_{2}^{x} \left(\frac{t}{\log t}\right)^k dt = \frac{x^{k+1}}{(k+1)(\log x)^k}\left(1+ O\left(\frac{1}{\log x}\right)\right),
$$
uniformly in $k>0$, and hence we deduce the following corollary.
\begin{cor}\label{Main2} Let $x$ be large. There exists a positive constant $B$ such that uniformly for all real numbers $k$ with $0<k \leq \log x/(B\log_2 x\log_3 x)$ we have
$$\sums h(d)^k =\frac{\mathcal{H}(k)}{k+1}\cdot\frac{x^{k+1}}{(\log x)^k}\left(1+ O\left(\frac{1}{\log x}\right)\right).$$
\end{cor}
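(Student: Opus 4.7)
The plan is to derive the corollary as an immediate consequence of Theorem~\ref{Main} together with the integral identity displayed just before the statement. All that is needed is a brief justification of that integral identity and a check that the error term of Theorem~\ref{Main} is absorbed into the announced relative error of $O(1/\log x)$.

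For the integral estimate
\begin{equation*}
\int_{2}^{x}\left(\frac{t}{\log t}\right)^k dt = \frac{x^{k+1}}{(k+1)(\log x)^k}\left(1+O\!\left(\frac{1}{\log x}\right)\right),
\end{equation*}
valid uniformly in $k>0$, I would integrate by parts with $u=(\log t)^{-k}$ and $dv=t^k\,dt$. The boundary term at $t=x$ gives the claimed main contribution, the boundary term at $t=2$ is $O(1)$ and hence negligible, and the remaining integral $\frac{k}{k+1}\int_{2}^{x}t^k(\log t)^{-k-1}\,dt$ is of order $x^{k+1}/((k+1)(\log x)^{k+1})$. This last estimate is seen by splitting at $t=x/\log x$: the upper portion dominates and is a factor $1/\log t\asymp 1/\log x$ smaller than the main integral, while the lower portion is dwarfed by $t^k$-growth. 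Uniformity in $k$ is clear from the argument.

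It remains to absorb Theorem~\ref{Main}'s remainder $x^{k+1}(\log x)^{-k}\exp(-\log x/(200\log_2 x))$ into the corollary's main term $\mathcal{H}(k)x^{k+1}/((k+1)(\log x)^k)$. The relative error is
\begin{equation*}
\frac{k+1}{\mathcal{H}(k)}\cdot \exp\!\left(-\frac{\log x}{200\log_2 x}\right),
\end{equation*}
so it suffices to show $\mathcal{H}(k)\gg 1$ uniformly on the range $0<k\leq \log x/(B\log_2 x\log_3 x)$. This follows at once from the Euler product in Theorem~\ref{Main}: each local factor $\mathcal{H}_p(k)$ is positive, and Taylor expanding $(1\pm 1/p)^{-k}$ in the closed-form expression gives $\mathcal{H}_p(k)=1+O(k^2/p^2)$ for $p$ large, so that the product converges uniformly in $k$ on the admissible range; meanwhile the finitely many small primes contribute bounded positive factors. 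Consequently $(k+1)/\mathcal{H}(k)\ll \log x$, and the exponential factor handily beats $1/\log x$, giving the corollary.

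The only mildly delicate step in the whole plan is the uniform lower bound $\mathcal{H}(k)\gg 1$; everything else is a formal substitution. Once this is in hand, combining the integral identity with Theorem~\ref{Main} and collecting the error terms yields the stated asymptotic.
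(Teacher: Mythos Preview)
Your approach matches the paper's, which simply substitutes the stated integral identity into Theorem~\ref{Main} without further comment; your added justification of the integral asymptotic via integration by parts and of the error-term absorption is the natural elaboration. One caveat: your argument that $\mathcal{H}(k)\gg 1$ via ``finitely many small primes contribute bounded positive factors'' is imprecise when $k$ is unbounded, since the Taylor expansion $\mathcal{H}_p(k)=1+O(k^2/p^2)$ is only valid for $p\gg k$, so the ``small'' primes are those with $p\lesssim k$ (not a fixed finite set) and their factors actually grow with $k$; the conclusion $\mathcal{H}(k)\gg 1$ is nevertheless correct---for large $k$ it follows from Proposition~\ref{LeadingConstant}, and for bounded $k$ from continuity together with positivity of the moments.
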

One can obtain similar results for the corresponding moments of class numbers over fundamental discriminants, by using the techniques of this paper together with \cite{Ra}.  Note that when $d$ is fundamental, $h(d)$ is the narrow class number of the real quadratic field $\mathbb{Q}(\sqrt{d})$, which equals the class number of $\mathbb{Q}(\sqrt{d})$ if the negative Pell equation $t^2-du^2=-4$ is solvable, and equals twice this class number if the negative Pell equation is not solvable. Hence, in order to translate our results to the setting of real quadratic fields, one needs to understand the difficult problem of the distribution of discriminants $d$ for which the negative Pell equation is solvable (see \cite{FoKl} for a reference on this problem). 

As an application of Theorem \ref{Main} we investigate large values of  $h(d)$ and their distribution, when the discriminants $d$ are ordered according to the size of their fundamental units. We remark here that this ordering causes the class numbers $h(d)$ to be distributed very differently than if one orders the $d$'s naturally according to their size. Indeed, note that the class number $h(d)$ is expected to be typically of size $\log\ep$ if we order by the size of $d$, while Theorem \ref{Main} shows that $h(d)$ is typically of size $\ep/\log\ep$ when we order the $d$'s by the size of $\ep$. This is due to the fact that this latter order favours those discriminants $d$ with small fundamental units $\ep$. 


It follows from the work of Littlewood \cite{Li} that assuming the generalized Riemann hypothesis GRH one can approximate $L(1, \chi_d)$ by a short Euler product over the primes $p\leq (\log d)^{2+o(1)}$. More precisely, assuming GRH one has (see Lemma 2.1 of \cite{GrSo})
\begin{equation}\label{TruncGRH}
L(1, \chi_d)= \prod_{p\leq (\log d)^2}\left(1-\frac{\chi_d(p)}{p}\right)^{-1}\left(1+O\left(\frac{\log_3 d}{\log_2 d}\right)\right).
\end{equation}
Moreover, Littlewood \cite{Li} conjectured that the shorter Euler product over the primes $p\leq \log d$ still serves as a good approximation for $L(1,\chi_d)$. More precisely we have
\begin{Con}[Littlewood]\label{LittlewoodConjecture}
$$L(1,\chi_d)\sim \prod_{p\leq \log d}\left(1-\frac{\chi_d(p)}{p}\right)^{-1}.$$
\end{Con}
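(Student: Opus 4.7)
Since Conjecture~\ref{LittlewoodConjecture} is among the oldest open problems concerning $L(1,\chi_d)$, I will only sketch an approach and identify the obstruction. The plan is to start from the GRH truncation \eqref{TruncGRH} at $p\leq(\log d)^2$ and try to push the cutoff down to $p\leq\log d$. Taking logarithms, the conjecture is equivalent to
\[
\sum_{\log d<p\leq(\log d)^2}\frac{\chi_d(p)}{p}=o(1)\qquad(d\to\infty),
\]
since the contribution of prime squares and higher in this window is trivially $O(1/\log d)$.

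First I would apply \eqref{TruncGRH} to reduce the problem entirely to the displayed tail sum. Next, by Abel summation, the tail converts to an integral of $\pi(t,\chi_d):=\sum_{p\leq t}\chi_d(p)$ against $dt/t^2$ over $t\in[\log d,(\log d)^2]$. The required input is therefore a prime number theorem for $\chi_d$ in extraordinarily short intervals, namely $\pi(t,\chi_d)=o(t/\log t)$ uniformly for $t\geq \log d$. The explicit formula combined with GRH only yields $\psi(t,\chi_d)\ll t^{1/2}(\log dt)^2$, which is non-trivial solely once $t\gg(\log d)^{4+o(1)}$; this is precisely the barrier that forces the cutoff at $(\log d)^2$ in \eqref{TruncGRH}.

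The main obstacle is that cancellation in $\sum_{p\leq t}\chi_d(p)$ for $t$ a fixed power of $\log d$ is far beyond GRH, and also beyond any known character sum estimate of Burgess or P\'olya--Vinogradov type: the Burgess range starts only at $t\geq d^{1/4+o(1)}$, which is astronomically larger than $(\log d)^2$. A conditional proof would presumably demand a hypothesis such as a zero-free strip of width $1-c/\log_2 d$ for $L(s,\chi_d)$, or a sub-Burgess bound effective at length $(\log d)^A$; neither looks attainable by present methods. I would therefore settle for an \emph{almost-all} version, establishing the conjecture for all but $o(x)$ discriminants $d$ with $\ep\leq x$, by pairing the moment asymptotic of Theorem~\ref{Main} with a second-moment bound on the ratio $L(1,\chi_d)\big/\prod_{p\leq\log d}(1-\chi_d(p)/p)^{-1}$. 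This is likely as far as current technology allows.
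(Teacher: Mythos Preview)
This statement is presented in the paper as a \emph{conjecture} (Littlewood's Conjecture), not as a theorem; the paper offers no proof and uses it only as a conditional hypothesis in \eqref{CrudeBound} and Theorem~\ref{GRHBound}. There is therefore no ``paper's own proof'' to compare against.

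You have correctly identified the statement as open and have given a sound account of the obstruction: pushing the Euler-product cutoff from $(\log d)^2$ down to $\log d$ amounts to showing cancellation in $\sum_{p\le t}\chi_d(p)$ for $t$ a fixed power of $\log d$, and neither GRH (via the explicit formula) nor any Burgess/P\'olya--Vinogradov-type bound reaches that short a range. That analysis is accurate and in line with the discussion in Granville--Soundararajan~\cite{GrSo}, which the paper cites as giving ``strong support'' to the conjecture---precisely the almost-all type of result you allude to at the end. In short, your proposal is not a proof, but it is not meant to be one, and nothing more can be expected here.
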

In \cite{GrSo}, Granville and Soundararajan investigated the distribution of $L(1,\chi_d)$ and their results give strong support to this conjecture. 
Since $\sqrt{d}\leq \ep$, we deduce from the class number formula \eqref{ClassNumberFormula} that
\begin{equation}\label{CrudeBound}
h(d)\leq \begin{cases} \displaystyle{\left(2e^{\gamma}+o(1)\right) \frac{\ep\log\log \ep}{\log \ep}} &\text{ assuming GRH},\\
\displaystyle{\left(e^{\gamma}+o(1)\right) \frac{\ep\log\log \ep}{\log \ep}} &\text{ assuming Conjecture \ref{LittlewoodConjecture}},\\
\end{cases}
\end{equation}
where $\gamma$ is the Euler-Mascheroni constant. 
We shall prove that one can save a factor of $3$ over these bounds. This saving comes from the fact that the discriminants $d$ for which $\ep$ is very close to $\sqrt{d}$ satisfy $\chi_d(2)\neq 1$ and $\chi_d(3)\neq 1$.
\begin{thm}\label{GRHBound}
Let  $d$ be a large positive discriminant. Then we have 
$$
h(d)\leq \begin{cases} \displaystyle{\left(\frac{2e^{\gamma}}{3}+o(1)\right) \frac{\ep\log\log \ep}{\log \ep}} &\text{ assuming GRH},\\
 \displaystyle{\left(\frac{e^{\gamma}}{3}+o(1)\right) \frac{\ep\log\log \ep}{\log \ep}} &\text{ assuming Conjecture \ref{LittlewoodConjecture}}.\\
\end{cases}$$
\end{thm}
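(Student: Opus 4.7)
The plan is to combine the class number formula \eqref{ClassNumberFormula} with the truncated Euler product \eqref{TruncGRH} (and, for the Littlewood bound, with Conjecture \ref{LittlewoodConjecture}), splitting the analysis according to the value of $u_d$ in the Pell equation $t_d^2-du_d^2=4$. From $t_d=\sqrt{4+du_d^2}>u_d\sqrt{d}$ one obtains $\ep>u_d\sqrt{d}$, so $\sqrt{d}/\ep<1/u_d$. Writing
$$h(d)=\frac{\sqrt{d}}{\ep}\cdot\frac{\ep}{\log\ep}\cdot L(1,\chi_d),$$
the problem reduces to bounding $L(1,\chi_d)$ finely enough given the size of $\sqrt{d}/\ep$. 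If $u_d\geq 3$, then $\sqrt{d}/\ep<1/3$, so the standard estimate $L(1,\chi_d)\leq (2e^{\gamma}+o(1))\log_2 d$ (which follows from \eqref{TruncGRH} together with Mertens' theorem, bounding each Euler factor by $(1-1/p)^{-1}$), combined with $\log_2 d=\log_2\ep+O(1)$, immediately delivers the GRH bound; the Littlewood case is identical with $2e^{\gamma}$ replaced by $e^{\gamma}$.

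The delicate case is $u_d\in\{1,2\}$, where $\sqrt{d}/\ep$ may be essentially $1$ and the factor of $3$ must come from $L(1,\chi_d)$ itself. The key arithmetic fact is that in this regime one automatically has $\chi_d(2)\neq 1$ and $\chi_d(3)\neq 1$. Indeed, if $u_d=1$ then $d=t_d^2-4$: when $t_d$ is odd one gets $d\equiv 5\pmod 8$, when $t_d$ is even $d$ is even, and in either subcase $d\bmod 3\in\{0,2\}$. If $u_d=2$ then $t_d=2s$ with $s$ necessarily odd (otherwise $d=s^2-1\equiv 3\pmod 4\notin\mathcal{D}$), so $d\equiv 0\pmod 8$ and once again $d\bmod 3\in\{0,2\}$. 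Consequently, for these $d$ the Euler factors at $p\in\{2,3\}$ in \eqref{TruncGRH} satisfy $(1-\chi_d(p)/p)^{-1}\leq 1$, and estimating these two factors trivially instead of by $(1-1/p)^{-1}$ saves exactly $(1-1/2)(1-1/3)=1/3$ relative to Mertens' theorem, giving
$$L(1,\chi_d)\leq \left(\frac{2e^{\gamma}}{3}+o(1)\right)\log_2\ep.$$
Together with $\sqrt{d}/\ep\leq 1$ this yields the GRH bound, and under Conjecture \ref{LittlewoodConjecture} the same argument (truncating at $\log d$ rather than $(\log d)^2$) produces the improved constant $e^{\gamma}/3$.

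The only real obstacle is the elementary but essential arithmetic in the second case: without the constraints $\chi_d(2),\chi_d(3)\neq 1$ for $u_d\in\{1,2\}$, neither subcase would yield the correct constant, so the saving of exactly the factor $3$ rests precisely on the structure of small-$u_d$ solutions of the Pell equation modulo $24$. Everything else is standard analytic input from \eqref{TruncGRH} and Mertens' theorem.
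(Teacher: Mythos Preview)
Your proof is correct and follows essentially the same approach as the paper. Both arguments split according to the size of $u_d$, use the arithmetic of $d=t_d^2-4$ (respectively $d=s^2-1$) to force $\chi_d(2),\chi_d(3)\in\{-1,0\}$ when $u_d\in\{1,2\}$, and use the trivial bound $(1-\chi_d(p)/p)^{-1}\le (1-1/p)^{-1}$ for $p=2,3$ when $u_d\ge 3$; the paper merely packages this as showing $E(d):=u_d^{-1}(1-\chi_d(2)/2)^{-1}(1-\chi_d(3)/3)^{-1}\le 1$ uniformly, but the content is identical.
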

It follows from Theorem \ref{Main} that $h(d) (\log \ep)/\ep$ has a limiting distribution as $\ep\to \infty$. This also follows from the results of \cite{Ra2}. This distribution function is difficult to compute, due to complicated nature of its moments (see \eqref{TheH} below).  Nevertheless, using the saddle-point method, we are able to obtain a precise estimate for its large deviations. More specifically, using Theorem \ref{Main} we show, in a large uniform range, that the tail of this distribution has the same shape as that of class numbers of imaginary quadratic fields ordered by the size of their discriminants, which was derived by Granville and Soundararajan \cite{GrSo}. As a consequence, we prove that the second bound in Theorem \ref{GRHBound} is best possible, and is attained for many discriminants $d\in \D$ with $\ep \leq x$. 

\begin{thm}\label{Distribution}
Let $x$ be large. There exists a positive constant $C$, such that uniformly in the range $1\ll \tau \leq \log_2 x-\log_3 x-\log_4 x-C$, the proportion of positive discriminants $d$ with $\ep\leq x$ and such that
$$
h(d)\geq \frac{e^{\gamma}}{3} \frac{x}{\log x}\cdot \tau,
$$
equals 
\begin{equation}\label{TailDistributionClass}  \exp\left(-\frac{e^{\tau-A_0}}{\tau}\left(1+ O\left(\frac{1}{\sqrt{\tau}}\right)\right)\right),
\end{equation}
where 
\begin{equation}\label{SpecialConstant}
A_0:= \int_0^1\frac{\log\cosh(t)}{t^2}dt + \int_1^{\infty}\frac{\log\cosh(t)-t}{t^2}dt+1=0.8187\cdots.
\end{equation}

\end{thm}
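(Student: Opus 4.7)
The plan is to apply the saddle-point method to the moment asymptotic of Theorem~\ref{Main}, following the scheme devised by Granville and Soundararajan~\cite{GrSo} for the tails of $L(1,\chi_d)$. Set $v = v(\tau) := (e^{\gamma}/3)(x/\log x)\tau$, let $\mathcal{M}_k := \sums h(d)^k$, and write $N(x,v)$ for the number of $d \in \D$ with $\ep \le x$ and $h(d) \ge v$; by Sarnak's asymptotic \eqref{SarnakAsymp}, $\Phi(x) := \mathcal{M}_0 \sim (35/16)x$.

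The central preliminary task is a precise asymptotic expansion of $\log\mathcal{H}(k)$ as $k \to \infty$. A Taylor expansion of the Euler factors in Theorem~\ref{Main} shows that for $p \ge 5$,
\begin{equation*}
\mathcal{H}_p(k) = \tfrac{1}{2}\bigl((1-1/p)^{-k} + (1+1/p)^{-k}\bigr) + O(k/p^2),
\end{equation*}
whereas the primes $p \in \{2,3\}$ are exceptional: since the coefficient $\tfrac12-\tfrac{3}{2p}$ vanishes at $p=3$, one has $\mathcal{H}_3(k) \to 2/3$ and $\mathcal{H}_2(k) \to 1/2$, together contributing $\log(1/2)+\log(2/3) = -\log 3$ in the limit. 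Substituting $p = k/t$ in the sum $\sum_{p\ge 5}\log\mathcal{H}_p(k)$, applying Abel summation with the prime number theorem, and identifying the resulting integral with the regularized $\log\cosh$-integral in \eqref{SpecialConstant}, one obtains
\begin{equation*}
\log\mathcal{H}(k) = k\log_2 k + k(\gamma - \log 3) + (A_0 - 1)\frac{k}{\log k} + O\!\left(\frac{k}{(\log k)^{3/2}}\right).
\end{equation*}
The appearance of the coefficient $A_0 - 1$---not merely $O(1)$---on the $k/\log k$ term is exactly what shifts the saddle point from $e^{\tau-1}$ to $e^{\tau-A_0}$ and produces the constant $A_0$ in \eqref{TailDistributionClass}.

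For the upper bound on $N(x,v)$, Markov's inequality together with Corollary~\ref{Main2} yields
\begin{equation*}
\frac{N(x,v)}{\Phi(x)} \le \frac{\mathcal{M}_k}{v^{k}\Phi(x)} \ll \frac{\mathcal{H}(k)}{k+1}\left(\frac{3e^{-\gamma}}{\tau}\right)^{k},
\end{equation*}
valid uniformly for $k \le \log x/(B\log_2 x \log_3 x)$. Taking logarithms and inserting the expansion above, the right side becomes $\exp\!\bigl(k(\log_2 k - \log\tau) + (A_0-1)k/\log k + O(k/(\log k)^{3/2})\bigr)$. The saddle-point equation $\log_2 k + A_0/\log k = \log\tau + O(1/(\log k)^2)$ then forces $k = k^* = e^{\tau-A_0}(1 + O(\tau^{-1}))$, which lies in the admissible range precisely when $\tau \le \log_2 x - \log_3 x - \log_4 x - C$. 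Substituting back gives the upper bound $\exp(-e^{\tau-A_0}/\tau\,(1 + O(\tau^{-1/2})))$.

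For the matching lower bound, I would adapt the moment-localization argument of \cite{GrSo}. Split $\mathcal{M}_{k^*}$ according to whether $h(d) \le v(1-\eta)$, $v(1-\eta) < h(d) < v(1+\eta)$, or $h(d) \ge v(1+\eta)$, with $\eta \asymp \tau^{-1/2}$. Applying Markov at the nearby moments $k^* \pm \Delta k$ (with $\Delta k \asymp \sqrt{k^*}$) and comparing with the asymptotics of $\mathcal{M}_{k^*\pm\Delta k}$ from Theorem~\ref{Main} shows that the first and third pieces each contribute only $o(\mathcal{M}_{k^*})$. The middle piece is therefore of order $\mathcal{M}_{k^*}$, and since it is bounded above by $(v(1+\eta))^{k^*} N(x, v(1-\eta))$, this produces the matching lower bound on $N(x, v(1-\eta))$; adjusting $\tau$ by $O(\eta)$ then closes \eqref{TailDistributionClass}. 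The principal obstacle is the precise expansion of $\log\mathcal{H}(k)$ with the correct coefficient $A_0 - 1$ on the $k/\log k$ correction, which requires uniform control of $\mathcal{H}_p(k)$ across the transition zone $p \asymp k$ where the Euler factors switch from exponential to polynomial behavior in $k$; once this is secured, the remaining saddle-point bookkeeping is standard.
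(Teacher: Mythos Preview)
Your proposal is correct and follows essentially the same saddle-point scheme as the paper: the paper also first proves the expansion $\log\mathcal{H}(k)=k\log_2 k+k(\gamma-\log 3)+(A_0-1)k/\log k+O(k/(\log k)^2)$ (Proposition~\ref{LeadingConstant}, with a slightly sharper error than your $O(k/(\log k)^{3/2})$, though yours suffices) and then runs the saddle point at $\kappa=e^{\tau-A_0}$. The only difference is packaging: rather than your direct Markov plus three-way moment split, the paper works with the Mellin integral $k\int_0^\infty t^{k-1}\mathcal{N}_x(t)\,dt$, localizes it to $[\tau-\delta,\tau+\delta]$ with $\delta\asymp 1/\sqrt{\log\kappa}$ by comparing moments at $\kappa e^{\pm\delta}$, and then uses the monotonicity of $\mathcal{N}_x(t)$ to extract both the upper and lower bounds at once---an equivalent implementation of the same idea.
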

\begin{cor}\label{OmegaResult}
There are at least $x^{1-1/\log\log x}$ discriminants  $d\in \D$ with $\ep\leq x$ such that
$$ h(d)\geq \left(\frac{e^{\gamma}}{3}+o(1)\right) \frac{\ep\log\log \ep}{\log \ep}.$$
\end{cor}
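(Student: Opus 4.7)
The plan is to apply Theorem~\ref{Distribution} at the very top of its admissible range in $\tau$, multiply the resulting proportion by the total count from Sarnak's asymptotic~\eqref{SarnakAsymp}, and then translate the resulting uniform lower bound on $h(d)$ into the desired bound in terms of $\ep$.

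Concretely, I would set $\tau = \log_2 x - \log_3 x - \log_4 x - C$, where $C$ is the constant furnished by Theorem~\ref{Distribution}. A direct calculation gives $e^{\tau - A_0} = e^{-A_0 - C}\log x/(\log_2 x \cdot \log_3 x)$, so
\[
\frac{e^{\tau - A_0}}{\tau}\bigl(1 + O(\tau^{-1/2})\bigr) \ll \frac{\log x}{(\log_2 x)^2 \log_3 x}.
\]
Theorem~\ref{Distribution} then yields a proportion of at least $\exp\bigl(-c\log x/((\log_2 x)^2 \log_3 x)\bigr) \ge x^{-1/\log_2 x}$ of positive discriminants $d$ with $\ep \le x$ for which $h(d) \ge (e^{\gamma}/3)(x/\log x)\tau$. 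Combined with the count $\sim (35/16) x$ from~\eqref{SarnakAsymp}, this produces at least $x^{1 - 1/\log_2 x}$ such $d$ once $x$ is sufficiently large.

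It remains to deduce $h(d) \ge (e^{\gamma}/3 + o(1))\ep \log_2 \ep/\log \ep$ for each such $d$. Writing
\[
\frac{e^{\gamma}}{3}\cdot\frac{x}{\log x}\tau \;=\; \frac{e^{\gamma}}{3}\cdot\frac{\ep \log_2 \ep}{\log \ep}\cdot\frac{x \log \ep}{\ep \log x}\cdot\frac{\tau}{\log_2 \ep},
\]
I would use that $t \mapsto (\log t)/t$ is decreasing for $t > e$, so $(x\log \ep)/(\ep \log x) \ge 1$ for every $\ep \le x$; and since $\log_2 \ep \le \log_2 x$ we also have $\tau/\log_2 \ep \ge \tau/\log_2 x = 1 - O(\log_3 x/\log_2 x) = 1 - o(1)$. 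Multiplying these two factors produces the desired inequality. The trivial bound $h(d) \ll \sqrt{d} \le \ep$ together with the large size of $h(d)$ guaranteed by our selection forces $\ep \to \infty$ as $x \to \infty$, so every term $\log_2 \ep$, $\log_3 \ep$ above is well-defined.

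The argument is essentially bookkeeping; the one delicate point is that $\tau$ must be pushed to the \emph{very top} of the range of Theorem~\ref{Distribution}, so that $\tau/\log_2 x = 1 - o(1)$. This is exactly why the full range $\tau \le \log_2 x - \log_3 x - \log_4 x - C$ in Theorem~\ref{Distribution} is needed here rather than just $\tau \le (1 - \delta)\log_2 x$; anything smaller would only yield the weaker conclusion $h(d) \ge (c e^{\gamma}/3 + o(1))\ep \log_2 \ep/\log \ep$ with $c < 1$.
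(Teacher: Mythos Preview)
Your argument is correct. You derive the corollary from Theorem~\ref{Distribution} by pushing $\tau$ to the top of its admissible range, and you also carry out explicitly the passage from a lower bound in terms of $x/\log x$ to one in terms of $\ep\log_2\ep/\log\ep$ (via the monotonicity of $(\log t)/t$ and the observation that the selected $d$ are forced to have $\ep \gg x\log_2 x/\log x$), a step the paper leaves to the reader.

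The paper's own argument, found in the source after \texttt{\textbackslash end\{document\}}, takes a different route that bypasses Theorem~\ref{Distribution} entirely. It works directly from the moment asymptotic: setting $k=\log x/(\log_2 x)^2$ and $Y=\tfrac{e^{\gamma}}{3}\tfrac{x}{\log x}\log_2 x\cdot\exp(-c_1/\sqrt{\log_2 x})$, it uses Corollary~\ref{Main2} and Proposition~\ref{LeadingConstant} to show that the terms with $h(d)<Y$ contribute negligibly to $\sum_{\ep\le x}h(d)^k$, and then combines this with the crude bound $h(d)\ll x$ to extract $|\{d:h(d)\ge Y\}|\gg x^{1-1/\log_2 x}$. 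Your approach is cleaner once Theorem~\ref{Distribution} is in hand, since the saddle-point work is already absorbed into~\eqref{TailDistributionClass}; the paper's direct moment argument is more self-contained, requiring only the size of $\mathcal{H}(k)$ rather than the full distributional statement. Both land on the same threshold and the same count.
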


The paper is organized as follows. In Section 2, we prove the conditional result Theorem \ref{GRHBound}. In Section 3, we establish an asymptotic formula for the sum $\sum_{\ep\leq x} \chi_d(m) d^{k/2}$. This will be one of the main ingredients to compute the moments of $h(d)$ and prove Theorem \ref{Main}, which shall be completed in Section 4. In Section 5, we investigate the distribution of large values of $h(d)$ and prove Theorem \ref{Distribution}.

\section{A conditional bound for the class number in terms of the fundamental unit: proof of Theorem \ref{GRHBound}}

We shall only establish the first bound under GRH, since the second can be obtained along the same lines by assuming Conjecture \ref{LittlewoodConjecture} and replacing $(\log d)^2$ by $\log d$ in the argument below.

Assume GRH and let $d$ be a large positive discriminant. Then, it follows from \eqref{TruncGRH} that
\begin{align*}
 L(1,\chi_d)&\leq (1+o(1))\left(1-\frac{\chi_d(2)}{2}\right)^{-1}\left(1-\frac{\chi_d(3)}{3}\right)^{-1}\prod_{5\leq p\leq (\log d)^{2}}\left(1-\frac{1}{p}\right)^{-1}\\
&\leq \left(\frac{2e^{\gamma}}{3}+o(1)\right)\left(1-\frac{\chi_d(2)}{2}\right)^{-1}\left(1-\frac{\chi_d(3)}{3}\right)^{-1}\log\log d.\\
\end{align*}
Let $\ep=(t_d+u_d\sqrt{d})/2$. Since $t_d^2= u_d^2 d+4$, then one has $\sqrt{d}\leq \ep/u_d$. Thus, from the class number formula \eqref{ClassNumberFormula} we derive, assuming GRH
$$ h(d) \leq \left(\frac{2e^{\gamma}}{3}+o(1)\right)\frac{ \ep \log\log \ep}{\log \ep}\cdot E(d), $$
where 
$$E(d)= \frac{1}{u_d} \left(1-\frac{\chi_d(2)}{2}\right)^{-1}\left(1-\frac{\chi_d(3)}{3}\right)^{-1}.$$
To complete the proof, we will show that for all $d\in \D$ we have $E(d)\leq 1$. First, if $u_d=1$ then $d=t_d^2-4\not\equiv 1 \pmod 8$ and $d\not\equiv 1\pmod 3$. Therefore we have $\chi_d(2), \chi_d(3)\in \{-1, 0\}$ and thus $E(d)\leq 1$ in this case. Next, if $u_d=2$ then $t_d=2m$ is even and we deduce that $d=m^2-1$. Then, similarly to the previous case, one has $d\not\equiv 1 \pmod 8$ and $d\not\equiv 1\pmod 3$, and hence $E(d)\leq 1$. Finally if $u_d\geq 3$ then
$$ E(d)\leq \frac{1}{3}\left(1-\frac{1}{2}\right)^{-1}\left(1-\frac{1}{3}\right)^{-1}=1,$$
as desired.

\section{An asymptotic formula for the sum $\sum_{\ep\leq x} \chi_d(m)d^{k/2} $}
The purpose of this section is to prove the following result, which is one of the main ingredients to compute the moments of class numbers of indefinite binary quadratic forms.
\begin{thm} \label{AsympChar}
Let $m$ be a positive integer. Then, for any real number $0<k\leq \log x/\log_2 x$ we have
$$\sum_{\substack{d\in \D\\ \ep \leq x}} \chi_d(m) d^{k/2}= \frac{\mathcal{C}(k)}{k+1} \cdot g_k(m)\cdot x^{k+1} +O\left(m \cdot x^{k+49/50}\right),
$$
where
\begin{equation}\label{TheC}
\mathcal{C}(k):= \left(1+\frac{1}{2^{k+2}}+\frac{2}{4^{k+2}}+ \frac{4}{4^{k+2}(2^{k+2}-1)}\right)\prod_{p>2}\left(1+ \frac{2}{p^{k+2}-1}\right),
\end{equation}
and $g_k(m)$ is the multiplicative function defined by
$$g_k(2^a):= \frac{(-1)^a}{2}\left(1+\frac{2\left(1+(-1)^{a}\right)}{4^{k+2}(2^{k+2}-1)}\right)\left(1+\frac{1}{2^{k+2}}+\frac{2}{4^{k+2}}+ \frac{4}{4^{k+2}(2^{k+2}-1)}\right)^{-1}\text{ for } a\geq 1,$$
and for $p>2$ and $a\geq 1$ we have
$$g_k(p^a):=\begin{cases} -\frac{1}{p}\left(1+ \frac{2}{p^{k+2}-1}\right)^{-1}& \text{ if } a \text{ is odd},\\ \left(1-\frac{2}{p}\right)\left(1+ \frac{2(p-1)}{(p-2)(p^{k+2}-1)}\right)\left(1+ \frac{2}{p^{k+2}-1}\right)^{-1}& \text{ if } a \text{ is even}.\end{cases}$$
\end{thm}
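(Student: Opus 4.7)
My plan is to parametrize the sum using the Pell equation $t^2 - du^2 = 4$ and to evaluate the resulting sum by a congruence analysis that yields local Euler factors at each prime. For each $d \in \D$ the fundamental unit satisfies $\ep = (t_d + u_d\sqrt{d})/2$ where $(t_d, u_d)$ is the smallest positive solution, and since $\ep$ has norm $1$ we have $t_d = \ep + 1/\ep$, so $\ep \le x$ is equivalent to $t_d \le x + 1/x$. More generally, every positive integer solution $(t, u)$ corresponds to a pair $(d, n)$ via $\ep_d^n = (t + u\sqrt{d})/2$. I would set $T_k(x) := \sum_{(d,n):\, \ep_d^n \le x} \chi_d(m) d^{k/2}$, so that $T_k(x) = \sum_{n \ge 1} S_k(x^{1/n})$ where $S_k(x)$ denotes the target sum, and M\"obius inversion gives $S_k(x) = \sum_n \mu(n) T_k(x^{1/n})$. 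The trivial bound $T_k(y) \ll y^{k+1}$ makes the $n \ge 2$ contribution $O(x^{(k+1)/2})$, which is negligible, so it suffices to compute $T_k(x)$.

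Next I would rewrite $T_k(x)$ as a double sum over $(t, u)$ with $u^2 \mid t^2 - 4$ and $(t^2-4)/u^2 \in \D$, and use $((t^2-4)/u^2)^{k/2} = (t/u)^k(1 - 4/t^2)^{k/2}$, with the last factor contributing only a negligible correction uniformly in the stated range of $k$. The problem then reduces essentially to evaluating
$$\sum_{u \ge 1} u^{-k} \sum_{\substack{3 \le t \le x + O(1) \\ u^2 \mid t^2 - 4,\ (t^2-4)/u^2 \in \D}} \chi_{(t^2-4)/u^2}(m)\, t^k.$$
For fixed $u$, each of the three conditions on $t$ --- namely $u^2 \mid t^2 - 4$, the discriminant condition $(t^2-4)/u^2 \equiv 0 \text{ or } 1 \pmod 4$, and the value of the Kronecker symbol $\chi_{(t^2-4)/u^2}(m)$ --- depends only on $t$ modulo some explicit modulus $M = M(u, m)$ built from $u^2$, $m^2$, and a small power of $2$. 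Splitting the inner sum into arithmetic progressions mod $M$ and using $\sum_{t \le x,\, t \equiv a \bmod M} t^k = x^{k+1}/(M(k+1)) + O(x^k)$ produces the main term.

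The resulting main term factors into an Euler product by the Chinese Remainder Theorem. At each odd prime $p \nmid m$ the local factor counts $(t, u) \bmod p^{\infty}$ satisfying the Pell compatibility $u^2 \mid t^2 - 4$ weighted by $u^{-k}$; at odd $p \mid m$ one additionally incorporates $\chi_d(p^{v_p(m)})$. Since for odd $p$ the value $\chi_d(p^a)$ depends only on whether $p \mid d$ and on the parity of $a$, this readily yields the stated formulas for $g_k(p^a)$ and the factors $1 + 2/(p^{k+2} - 1)$ appearing in $\mathcal{C}(k)$. The main obstacle will be the local analysis at $p = 2$: the Kronecker symbol $\chi_d(2)$ depends on $d \bmod 8$, and the discriminant condition $d \equiv 0 \text{ or } 1 \pmod 4$ requires a delicate case split according to the parities and $2$-adic valuations of $t$ and $u$, which is precisely what produces the irregular $p = 2$ contributions to both $\mathcal{C}(k)$ and $g_k(2^a)$.

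For the error term, I would split according to the size of $u$. For $u \le x^{1/50}$ the residue class analysis gives errors of order $m \cdot x^k$ per progression, summing to at most $O(m\, x^{k+1/50})$; for $u > x^{1/50}$, the Pell constraint restricts $t$ to at most $u^{o(1)}$ residue classes modulo $u^2$, so there are at most $O(x u^{o(1)}/u^2)$ valid $t \le x$, and $\sum_{u > x^{1/50}} (x u^{o(1)}/u^2)(x/u)^k \ll x^{k+49/50+o(1)}$. Combining these contributions yields the claimed $O(m\, x^{k+49/50})$ error, and uniformity in $k$ up to $\log x/\log_2 x$ follows by tracking the tails of the binomial expansion of $(1-4/t^2)^{k/2}$ and the partial-summation remainders.
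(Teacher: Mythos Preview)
Your approach is correct and essentially identical to the paper's: the paper also parametrizes by $(t,u)$ via the Pell correspondence, discards the $n\ge 2$ contribution trivially, restricts to small $u$ (they take $u\le x^{1/10}$ and appeal to Raulf's bound for the tail), writes $t=a+4u^2\ell$ and approximates $d(t,u)^{k/2}\approx (4u\ell)^k$, and then evaluates the complete character sum $\sum_{\ell \bmod m}\bigl(\tfrac{P_{a,u}(\ell)}{m}\bigr)$ prime by prime (their Lemma~3.2), combining this with Raulf's count of admissible residues $a\bmod 4u^2$ in each class modulo $8$ (their Lemma~3.3) to assemble the Euler product. Your single-modulus analysis modulo $M(u,m)$ is just a repackaging of this two-stage congruence argument, and your acknowledged ``main obstacle'' at $p=2$ is exactly where the paper's explicit lemmas do the work.
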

Our starting point is a parametrization of the set of discriminants $d\in \D$ such that $\ep\leq x$. If $d$ is such a discriminant and $\ep=(t+u\sqrt{d})/2$ is its fundamental unit, then $d=(t^2-4)/u^2$. We shall prove that summing over $d\in \D$ such that $\ep\leq x$ amounts to essentially summing over the corresponding pairs $(t, u)$ in a certain range. This follows from the arguments in \cite{Sa1} and was used by Raulf \cite{Ra} to compute the first moment of class numbers over fundamental discriminants. 
Here and throughout we define
$$d(t, u):=\frac{t^2-4}{u^2}, \text{ for } t, u\in \mathbb{N}.$$
Then we have
\begin{lem}\label{Correspondence}
Let $x>0$ be large. There is a one-to-one correspondence between the pairs $(d, n)$ such that $ d\in \D$ and  $\ep^n\leq x$, and the pairs of positive integers $(t, u)$ such that $2<t\leq X$ and $d(t, u)\in \D$, where
$X$ is the unique solution to the equation 
\begin{equation}\label{EquationX}
\frac{X+\sqrt{X^2-4}}{2}=x.
\end{equation}
Moreover, the relation between a pair $(d, n)$ and its corresponding pair  $(t,u)$ is given by
$$ \ep^n= \frac{t+u\sqrt{d}}{2}.$$

\end{lem}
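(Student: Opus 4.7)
The plan is to set up the claimed bijection by the explicit rule $\ep^n = (t + u\sqrt{d})/2$ and to verify the stated range of $t$ by a direct calculation. As a preliminary, I would solve \eqref{EquationX}: it rearranges to $X = x + 1/x$, and one checks that $(X + \sqrt{X^2 - 4})/2 = x$ is indeed equivalent.

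For the forward direction, given $d \in \D$ and $n \geq 1$ with $\ep^n \leq x$, I would define $(t, u)$ by $\ep^n = (t + u\sqrt{d})/2$. The algebraic fact I rely on, standard in the theory of Pell equations, is that $\ep$ is the smallest solution of $T^2 - dU^2 = 4$ with $T, U > 0$, and every positive power $\ep^n$ is again of the form $(T + U\sqrt{d})/2$ with positive integers $T, U$ satisfying $T^2 - dU^2 = 4$; in particular $d = (t^2 - 4)/u^2 = d(t, u)$. From $\ep\bar\ep = (t_d^2 - d u_d^2)/4 = 1$ I get $\bar\ep^n = \ep^{-n}$, so $t = \ep^n + \bar\ep^n = \ep^n + \ep^{-n} > 2$. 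Since $y \mapsto y + y^{-1}$ is increasing on $[1, \infty)$, the bound $\ep^n \leq x$ is equivalent to $t \leq x + 1/x = X$.

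Conversely, given $(t, u)$ with $2 < t \leq X$ and $d := d(t, u) \in \D$, I would form $y := (t + u\sqrt{d})/2$ and compute $y\bar{y} = (t^2 - du^2)/4 = 1$, so $y$ is a unit in $\mathbb{Q}(\sqrt{d})$ with $y > 1$. Applying the same classical fact, $y = \ep^n$ for a unique positive integer $n$, and the bound $t \leq X$ translates into $\ep^n = (t + \sqrt{t^2 - 4})/2 \leq (X + \sqrt{X^2 - 4})/2 = x$. Since the two maps are mutual inverses by construction, this establishes the bijection.

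The only real subtlety is the Pell-theoretic statement used in both directions, namely the bijection between positive integers $n$ and positive integer solutions $(T, U)$ of $T^2 - dU^2 = 4$ via $\ep^n = (T + U\sqrt{d})/2$. I would simply invoke this, citing the discussion in \cite{Sa1} where the same parametrization is used. The remaining computations, namely the inversion of \eqref{EquationX} and the monotonicity of $y + y^{-1}$, are entirely routine.
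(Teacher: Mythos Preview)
Your proposal is correct and follows essentially the same approach as the paper: both directions rest on the standard Pell-equation fact that the positive integer solutions of $T^2-dU^2=4$ are exactly the powers $\ep^n$, and the range $2<t\leq X$ is verified directly. Your presentation is slightly more explicit (you compute $X=x+1/x$ and use the monotonicity of $y+y^{-1}$ to handle the inequality, whereas the paper uses the increasing map $t\mapsto (t+\sqrt{t^2-4})/2$), but the argument is the same in substance.
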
 

\begin{proof} First, let $(t, u)$ be such that $2<t\leq X$ and $d(t, u)\in \D$. Hence $(t, u)$ is a solution to Pell's equation $t^2-u^2d=4$, with $d=d(t, u)$. Therefore, it follows from the theory of Pell's equation that there exists a positive integer $n$ such that 
$$ \ep^n= \frac{t+u\sqrt{d}}{2}=\frac{t+\sqrt{t^2-4}}{2}\leq x,$$ 
since $2< t\leq X$. On the other hand, if $(d, n)$ is such that $d\in \D$ and $\ep^n\leq x$, then  $\ep^n= (t+u\sqrt{d})/2$ where $(t, u)$ is a solution to Pell's equation $t^2-u^2 d=4$. Thus, for this pair $(t, u)$ we have $d(t, u)=d\in \D$ and
$2<t\leq X$ since $\ep^n= (t+\sqrt{t^2-4})/2\leq x$, as desired. 
 \end{proof}

Let $d\in \D$ and $\ep= (t+u\sqrt{d})/2$ be its fundamental unit, so that $d=d(t, u)$. If $t\equiv a \pmod{4u^2}$ then $t=a+4u^2\ell$ and hence $d=16 u^2\ell^2+ 8a\ell + d(a, u)$ for some $\ell\in \mathbb{N}$. Therefore, one has $\chi_d(m)= \left(\frac{P_{a, u}(\ell)}{m}\right)$, where $P_{a, u}$ is the quadratic polynomial defined by 
\begin{equation}\label{QuadPoly}
P_{a, u}(\ell):= 16 u^2\ell^2+ 8a\ell + d(a, u).
\end{equation}  
One of the key ingredients in the proof of Theorem \ref{AsympChar} is the evaluation of the following complete character sum
$$C_{m}(P_{a, u}):= \sum_{\ell=0}^{m-1}\left(\frac{P_{a, u}(\ell)}{m}\right).$$
\begin{lem}\label{CharacterSum}
Let $a, u$ be positive integers such that $d(a,u)\in \D$. Let $m$ be a positive integer and write $m=2^{e_1} p_2^{e_2}\cdots p_r^{e_r}$, where $e_1\geq 0$ and $e_j\geq 1$ for $2\leq j\leq r$. Also, let $m_0$ be the squarefree part of $m/2^{e_1}$. 

\noindent Then, if $(m_0, u)>1$ we have $C_{m}(P_{a, u})=0$. Moreover, if $(m_0, u)=1$ then we have 
$$\frac{C_{m}(P_{a, u})}{m}= b_{a, u}(m)\frac{(-1)^{\omega(m_0)}}{m_0} \prod_{\substack{2\leq j\leq r\\ e_j \text{ is even }}} \left(1-\frac{2}{p_j}\right) \prod_{\substack{ p_j \mid u\\ e_j \text{ is even }}} \left(1+\frac{1}{p_j-2}\right), $$
where $\omega(m_0)$ is the number of prime factors of $m_0$, and 
$b_{a, u}(m)= 1$ if $m$ is odd, and if $m$ is even we have 
$$ 
b_{a, u}(m)=\begin{cases} 0 & \text{ if } d(a, u)\equiv 0 \pmod 4\\ 1 & \text{ if } d(a, u)\equiv 1 \pmod 8\\
(-1)^{e_1} & \text{ if } d(a, u)\equiv 5 \pmod 8.
\end{cases}$$

\end{lem}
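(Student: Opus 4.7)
The plan is to factor $C_m(P_{a,u})$ over prime powers using the Chinese Remainder Theorem and the multiplicativity of the Kronecker symbol, reducing the task to evaluating $C_{p^e}(P_{a,u})$ for each $p^e\Vert m$. The algebraic engine is the factorization
$$
u^2 P_{a,u}(\ell) = L_-(\ell)\,L_+(\ell), \qquad L_\pm(\ell) := 4u^2\ell + (a\pm 2),
$$
coming from completing the square together with $a^2-4 = u^2 d(a,u)$; it exists because the discriminant of $P_{a,u}$ in $\ell$ equals the perfect square $256$, independently of $a$ and $u$.

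For an odd prime $p$ with $p\nmid u$, the identity $(\tfrac{u^2}{p^e})=1$ gives $(\tfrac{P_{a,u}(\ell)}{p^e}) = (\tfrac{L_-(\ell) L_+(\ell)}{p^e})$. The substitution $\nu := L_-(\ell)$ is a bijection on $\mathbb{Z}/p^e\mathbb{Z}$ (leading coefficient $4u^2$ is a unit) and $L_+(\ell) = \nu + 4$, so I reduce to the classical sum
$$
C_{p^e}(P_{a,u}) = \sum_{\nu\bmod p^e}\left(\tfrac{\nu(\nu+4)}{p^e}\right),
$$
which evaluates to $-p^{e-1}$ for $e$ odd (using $\sum_{\nu\bmod p}(\tfrac{Ax^2+Bx+C}{p}) = -(\tfrac{A}{p})$ when $p\nmid B^2-4AC$) and to $p^{e-1}(p-2)$ for $e$ even (counting $\nu \not\equiv 0,-4 \pmod p$). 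Dividing by $p^e$ produces the factors $-1/p$ and $1-2/p$ in the stated formula.

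For an odd prime $p$ with $p\mid u$, write $u=p^\alpha u'$ with $p\nmid u'$. Since $u^2\mid(a-2)(a+2)$ and $p$ is odd, $p$ divides exactly one of $a\pm 2$, and that one is divisible by $p^{2\alpha}$; say $a-2 = p^\beta\delta$ with $\beta\geq 2\alpha$ and $p\nmid\delta$. Then $L_-(\ell) = p^{2\alpha} M(\ell)$ with $M(\ell) = 4u'^2\ell + p^{\beta-2\alpha}\delta$ linear of unit leading coefficient, while
$$
L_+(\ell) = L_-(\ell)+4 = p^{2\alpha} M(\ell) + 4.
$$
Dividing $u^2 P_{a,u} = L_-L_+$ by $u^2 = p^{2\alpha}u'^2$ and using $(\tfrac{u'^2}{p^e})=1$, the substitution $m = M(\ell)$ (still a bijection on $\mathbb{Z}/p^e\mathbb{Z}$) gives
$$
C_{p^e}(P_{a,u}) = \sum_{m\bmod p^e}\left(\tfrac{m}{p^e}\right)\left(\tfrac{p^{2\alpha}m+4}{p^e}\right).
$$
The key observation is that $p^{2\alpha}m+4\equiv 4\pmod p$ is a quadratic residue mod $p$, so its Jacobi symbol to $p^e$ equals $1$ for every $m$. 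The sum therefore collapses to $\sum_m(\tfrac{m}{p^e})$: equal to $0$ for odd $e$ (which is the source of the global vanishing $C_m(P_{a,u})=0$ whenever $(m_0,u)>1$) and to $p^{e-1}(p-1) = p^e(1-\tfrac{2}{p})(1+\tfrac{1}{p-2})$ for even $e$, matching the stated product.

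For $p=2$, the congruence $P_{a,u}(\ell) - d(a,u) = 8a\ell + 16u^2\ell^2 \equiv 0\pmod 8$ makes the Kronecker symbol $(\tfrac{P_{a,u}(\ell)}{2^{e_1}}) = (\tfrac{d(a,u)}{2})^{e_1}$ constant in $\ell$. Since $d(a,u)\in\D$, only the three residue classes $d\equiv 0\pmod 4$, $d\equiv 1\pmod 8$, $d\equiv 5\pmod 8$ arise, producing $C_{2^{e_1}}(P_{a,u}) = 0,\ 2^{e_1},\ (-1)^{e_1}2^{e_1}$ respectively, which is exactly $b_{a,u}(m)$. Assembling the prime-power contributions and identifying $m_0$ with $\prod_{j\geq 2,\, e_j\text{ odd}} p_j$ (all coprime to $u$ in the nonvanishing case) yields the full formula. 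The main obstacle lies in the $p\mid u$ branch: there the naive identity $(\tfrac{u^2 P_{a,u}}{p^e}) = (\tfrac{u^2}{p^e})(\tfrac{P_{a,u}}{p^e})$ degenerates to $0=0$, and one must extract the exact power $p^{2\alpha}$ from $L_-$ so that the clean identity $L_+ = p^{2\alpha}M + 4$ forces one of the two Jacobi factors to be identically $1$.
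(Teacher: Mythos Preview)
Your proof is correct and follows the same global architecture as the paper's: both use the Chinese Remainder Theorem to factor $C_m(P_{a,u})$ over prime powers, both treat $p=2$ via the congruence $P_{a,u}(\ell)\equiv d(a,u)\pmod 8$, and both reduce the odd-prime-power sums to sums over residues modulo $p$.

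The difference lies entirely in how the odd-prime evaluations are carried out. The paper simply reduces to $p^{e-1}\sum_{b\bmod p}\big(\tfrac{P_{a,u}(b)}{p}\big)^{e_j}$ and then quotes Lemma~2.14 of Raulf~\cite{Ra} for the values $-1,\,0,\,p-2,\,p-1$ in the four cases. You instead exploit the algebraic identity $u^2 P_{a,u}(\ell)=L_-(\ell)L_+(\ell)$ with $L_\pm(\ell)=4u^2\ell+(a\pm 2)$, which explains \emph{why} those values arise: for $p\nmid u$ a linear substitution sends the sum to the classical $\sum_\nu\big(\tfrac{\nu(\nu+4)}{p^e}\big)$, while for $p\mid u$ the factorisation isolates the exact power $p^{2\alpha}$ inside $L_\mp$ so that the companion factor $p^{2\alpha}M+4\equiv 4\pmod p$ has trivial symbol, collapsing the sum to $\sum_m\big(\tfrac{m}{p^e}\big)$. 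This route is self-contained and illuminates the vanishing when $(m_0,u)>1$ as the vanishing of a pure character sum; the paper's route is shorter on the page but outsources the computation. Both yield the same formula.
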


\begin{proof} Since  $C_{m}(P_{a, u})$ is a complete character sum then
$$
C_{m}(P_{a, u})= \sum_{\ell_1\bmod 2^{e_1}} \left(\frac{P_{a, u}(\ell_1)}{2^{e_1}}\right)
\prod_{j=2}^r\left(\sum_{\ell_j\bmod p_j^{e_j}} \left(\frac{P_{a, u}(\ell_j)}{p_j^{e_j}}\right)\right).
$$
First, since $P_{a, u}(\ell)\equiv d(a, u)\pmod 8$ for all $\ell$, then we obtain
$$
\sum_{\ell_1\bmod 2^{e_1}} \left(\frac{P_{a, u}(\ell_1)}{2^{e_1}}\right)
= 2^{e_1} \left(\frac{d(a, u)}{2}\right)^{e_1}= 2^{e_1} b_{a, u}(m),
$$
if $e_1\geq 1$. This equality is also valid if $e_1=0$.
On the other hand, for $2\leq j\leq r$ we have
$$
\sum_{\ell_j\bmod p_j^{e_j}} \left(\frac{P_{a, u}(\ell_j)}{p_j^{e_j}}\right)=\sum_{\ell_j\bmod p_j^{e_j}} \left(\frac{P_{a, u}(\ell_j)}{p_j}\right)^{e_j}.
$$
Since every $\ell\in \{1, 2,  \dots, p^{e}\}$ can be written in the form $i=b+ n p $ with $b\in \{1, 2, \dots, p\}$ and $n \in \{0, 1, \dots, p^{e-1}-1\}$ we deduce that for all $2\leq j\leq r$, we have
$$ \sum_{\ell_j\bmod p_j^{e_j}} \left(\frac{P_{a, u}(\ell_j)}{p_j}\right)^{e_j}= p_j^{e_j-1} \sum_{b_j\bmod p_j} \left(\frac{P_{a, u}(b_j)}{p_j}\right)^{e_j}.
$$
Now, it follows from Lemma 2.14 of \cite{Ra} that if $e_j$ is even we have
$$
\sum_{b_j\bmod p_j} \left(\frac{P_{a, u}(b_j)}{p_j}\right)^{e_j}
=\begin{cases} p_j-2 & \text{ if } p_j\nmid u\\
p_j-1 & \text{ if } p_j \mid u,
\end{cases}
$$
while if $e_j$ is odd we have
$$
\sum_{b_j\bmod p_j} \left(\frac{P_{a, u}(b_j)}{p_j}\right)^{e_j}
=\begin{cases} -1 & \text{ if } p_j\nmid u\\
0 & \text{ if } p_j \mid u,
\end{cases}
$$
Combining these estimates completes the proof.
\end{proof}
Note that when $m$ is even, the value of the character sum $C_{m}(P_{a, u})$ depends on whether $d(a, u)\equiv 0 \pmod 4$ or $d(a, u)\equiv 1, 5 \pmod 8$. Before completing the proof of Theorem \ref{AsympChar} we need the following Lemma which is a special case of Lemma 3.1 of \cite{Ra}.  

\begin{lem}[Lemma 3.1 of \cite{Ra}]\label{Raulf}
Let $\D_0=\{d \in \D: d\equiv 0\pmod 4\},\D_1= \{d \in \D: d\equiv 1\pmod 8\}$ and $\D_2= \{d \in \D: d\equiv 5\pmod 8\}$. Moreover, for  $i=0, 1, 2$, and a positive integer $u$ we define 
$$N_i(u):=|\{ 2<a\leq 4u^2+2: d(a, u) \in \D_i\}|.$$
Then, writing $u=2^{r_1}u_0$, where $r_1\geq 0$ and $u_0$ is odd, we obtain
\begin{align*}
 N_0(u)&=\begin{cases} 2\cdot 2^{\eta(u)} & \text{ if } r_1=0,\\ 
 4\cdot 2^{\eta(u)} & \text{ if } r_1=1,\\
 8\cdot 2^{\eta(u)} & \text{ if } r_1\geq 2,\end{cases} \ \ \ \ \ 
  N_1(u) =\begin{cases} 4\cdot 2^{\eta(u)} & \text{ if } r_1\geq 3,\\ 
0 & \text{ otherwise, }\end{cases}\\
 \text{ and }
 N_2(u)&=\begin{cases} 2\cdot 2^{\eta(u)} & \text{ if } r_1=0,\\ 
 4\cdot 2^{\eta(u)} & \text{ if } r_1\geq 3,\\
0 & \text{ otherwise, }\end{cases}\\
\end{align*}
where $\eta(u)=|\{p\geq 3: p|u\}|.$ 

\end{lem}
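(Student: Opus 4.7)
The plan is to enumerate the admissible integers $a \in (2, 4u^2+2]$ modulo $4u^2$ and sort them by $d(a,u) \bmod 8$. For $d(a,u) = (a^2-4)/u^2$ to be a positive integer one needs $a > 2$ and $u^2 \mid a^2-4$; moreover $d(a,u)$ cannot be a perfect square for $a > 2$, since $(a - uk)(a + uk) = 4$ in positive integers forces $a = 2$ or non-integer solutions, so the constraint $d(a,u) \in \D$ reduces to the congruence $d(a,u) \equiv 0$ or $1 \pmod 4$. Since the interval has length $4u^2$, each residue modulo $4u^2$ occurs exactly once, and the potentially problematic class $a \equiv 2$ is represented by $a = 4u^2+2$, which gives $d = 16(u^2+1) \in \D_0$ and so causes no boundary issue.

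Write $u = 2^{r_1} u_0$ with $u_0$ odd. By the Chinese Remainder Theorem the divisibility $u^2 \mid a^2-4$ splits into the odd condition $u_0^2 \mid a^2 - 4$ and the 2-adic condition $4^{r_1} \mid a^2-4$. For the odd part, $\gcd(a-2, a+2)$ divides $4$ and is coprime to $u_0$, so for each $p^k \Vert u_0$ one of $a \pm 2$ must be entirely divisible by $p^{2k}$, forcing $a \equiv \pm 2 \pmod{p^{2k}}$. Combining by CRT produces exactly $2^{\omega(u_0)} = 2^{\eta(u)}$ admissible residue classes of $a$ modulo $u_0^2$.

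The 2-adic part requires a case split according to $r_1 \in \{0, 1, 2\}$ or $r_1 \geq 3$. When $r_1 \geq 1$ one needs $a$ even; writing $a = 2c$, the condition becomes $c^2 \equiv 1 \pmod{2^{2r_1-2}}$, and $(\mathbb{Z}/2^n\mathbb{Z})^\times$ has $1$, $2$, or $4$ square roots of unity for $n = 1, 2, \geq 3$ respectively. Since $u_0^2 \equiv 1 \pmod 8$, the odd factor does not affect $d \bmod 8$, which can be read off from $a$ alone modulo a small power of $2$; a direct tabulation in each regime sorts the admissible residues into $\D_0, \D_1, \D_2$, or into the discarded classes $d \equiv 2, 3, 6, 7 \pmod 8$.

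The main delicate point is this 2-adic bookkeeping: for $r_1 = 0$ the admissible residues split evenly between $\D_0$ (even $a$, $d \equiv 4 \pmod 8$) and $\D_2$ (odd $a$, $d \equiv 5 \pmod 8$); for $r_1 = 1$ or $r_1 = 2$ half go to $\D_0$ and the rest are discarded; only when $r_1 \geq 3$ does the doubling of square roots of unity at $n = 3$ turn on the $\D_1$ contribution, yielding proportions $\tfrac14, \tfrac18, \tfrac18$ in $\D_0, \D_1, \D_2$ respectively with the remaining half discarded. Combining these 2-adic counts with the $2^{\eta(u)}$ odd-part factor via CRT, and using that each residue class modulo $4u^2$ is hit exactly once in $(2, 4u^2+2]$, produces the formulas $N_0(u), N_1(u), N_2(u)$ claimed in the lemma.
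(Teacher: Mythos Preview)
The paper does not supply its own proof of this lemma; it simply quotes it as a special case of Lemma~3.1 of Raulf~\cite{Ra}. Your argument is a correct self-contained proof: the non-square check via $(a-uk)(a+uk)=4$, the CRT splitting into the odd part (giving the $2^{\eta(u)}$ residues $a\equiv\pm 2\pmod{p^{2k}}$ for each odd $p^k\Vert u$) and the $2$-adic part, the observation that $u_0^2\equiv 1\pmod 8$ lets one read $d\bmod 8$ from $(a^2-4)/4^{r_1}$ alone, and the case-by-case tabulation for $r_1\in\{0,1,2\}$ and $r_1\geq 3$ all check out and yield the stated counts. Since there is no in-paper argument to compare against, I can only confirm that your approach is sound; the CRT decomposition you use is the natural route and is presumably close in spirit to Raulf's original proof.
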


We now have the necessary ingredients to complete the proof of Theorem \ref{AsympChar}.

\begin{proof}[Proof of Theorem \ref{AsympChar}]
First, by Lemma \ref{Correspondence} we have
$$
\sum_{d\in \D}\sum_{\substack{n\geq 1\\ \ep^n \leq x}} \chi_d(m) d^{k/2}= \sum_{u\geq 1}\sum_{\substack{2<t\leq X\\  d(t, u)\in \D}} \left(\frac{d(t, u)}{m}\right) d(t, u)^{k/2}.
$$
Moreover, it follows from Lemma 2.1 of \cite{Ra} that
\begin{equation}\label{RaulfBound}
\left|\{(t, u): u>x^{\delta}, 2<t\leq x, \text{ and } d(t, u)\in D\}\right|\ll x^{1-\delta/5},
\end{equation}
for any $0<\delta<1/2$. Choosing $\delta=1/10$ and noting that $d(t, u)\leq t^2/u^2\leq x^{2(1-1/10)}$ and $X=x+O(1/x)$ we obtain 
$$\sum_{d\in \D}\sum_{\substack{n\geq 1\\ \ep^n \leq x}} \chi_d(m) d^{k/2}= \sum_{u\leq x^{1/10}}\sum_{\substack{2<t\leq x\\  d(t, u)\in \D}} \left(\frac{d(t, u)}{m}\right) d(t, u)^{k/2}+ O\left(x^{9k/10+49/50}\right).
$$
On the other hand, since $d\leq \ep^2$ one has
$$ \sum_{d\in \D}\sum_{\substack{n\geq 2\\ \ep^n \leq x}} \chi_d(m) d^{k/2} \ll x^{k/2}\sum_{2\leq n\ll \log x}\sum_{\substack{d\in D\\ \ep \leq x^{1/n}}} 1 \ll x^{(k+1)/2} \log x,$$
by \eqref{SarnakAsymp}. Therefore, we deduce that
\begin{equation}\label{DiophantineChange1}
\sum_{\substack{d\in \D\\ \ep \leq x}} \chi_d(m) d^{k/2}
=\sum_{u\leq x^{1/10}}\sum_{\substack{2<t\leq x\\ d(t, u)\in \D}} \left(\frac{d(t, u)}{m}\right) d(t, u)^{k/2}+O\left(x^{9k/10+49/50}\right).
\end{equation}
For $u\leq x^{1/10}$ fixed, consider the sum
\begin{equation}\label{DiophantineChange2}
\begin{aligned}
S_{u}(x)
&:=\sum_{\substack{2<t\leq x\\ d(t, u)\in \D}} \left(\frac{d(t, u)}{m}\right)d(t, u)^{k/2}\\
&= \sum_{a=3}^{4u^2+2} \sum_{\substack{2<t\leq x\\ t\equiv a \bmod 4 u^2 \\ d(t, u)\in \D}}\left(\frac{d(t, u)}{m}\right)d(t, u)^{k/2}.
\end{aligned}
\end{equation}
Now, if $t\equiv a\pmod{4u^2}$ and $2<t\leq x$, then $t= a+ 4u^2\ell $ where $0\leq \ell \leq (x-a)/(4 u^2)$. Furthermore, one has $ d(t, u)= 16 u^2\ell^2+ 8a\ell+ d(a, u)$, and hence $d(t, u)\equiv d(a, u)\pmod 8$ and $d(t, u) \in \D$  if and only if $d(a, u)\in \D$. Therefore, we deduce that
$$
S_{u}(x)= \sum_{\substack{2< a\leq 4u^2+2\\ d(a, u)\in \D}} \sum_{0\leq \ell \leq \frac{x-a}{4u^2}}\left(\frac{P_{a, u}(\ell)}{m}\right)P_{a, u}(\ell)^{k/2},
$$
where $P_{a,u}(\ell)$ is defined in \eqref{QuadPoly} above. For any integers $\ell\geq 1$ and $2<a\leq 4u^2+2$ we have $P_{a, u}(\ell)= 16 u^2\ell^2+ O(u^2\ell)$, and hence 
$P_{a, u}(\ell)^{k/2}= (4u \ell)^k + O\big(k(4u)^k \ell^{k-1}\big)$ if $k=o(\ell).$
Therefore, we obtain 
\begin{equation}\label{InnerSum1}
\begin{aligned}
S_{u}(x)
&=  \sum_{\substack{2<a\leq 4u^2+2\\ d(a, u)\in \D}} \sum_{\sqrt{x}\leq \ell \leq \frac{x-a}{4u^2}}\left(\frac{P_{a, u}(\ell)}{m}\right)P_{a, u}(\ell)^{k/2}+ O\left(4^ku^{k+2} x^{(k+1)/2}\right)\\
&= (4u)^k\sum_{\substack{2<a\leq 4u^2+2\\ d(a, u)\in \D}} \sum_{\sqrt{x}\leq \ell \leq \frac{x-a}{4u^2}}\left(\frac{P_{a, u}(\ell)}{m}\right)\ell^k +O\left(u^{2-k} x^{k+1/2}\right),\\
\end{aligned}
\end{equation}
since $k>0$ and $4^k=x^{o(1)}.$
The inner sum can be computed from the sum $\sum_{\ell \leq y}\left(\frac{P_{a, u}(\ell)}{m}\right)$ using partial summation.
Now observe that for a real number $y\geq 2$ we have
$$
\sum_{\ell \leq y}\left(\frac{P_{a, u}(\ell)}{m}\right)
= \frac{y}{m}\sum_{\ell=0}^{m-1}\left(\frac{P_{a, u}(\ell)}{m}\right) + O\left(m \right)= \frac{C_{m}(P_{a, u})}{m}\cdot y+O(m). 
$$
Therefore, by partial summation we deduce that
\begin{equation}\label{InnerSum2}
\begin{aligned}
\sum_{\sqrt{x}\leq \ell \leq \frac{x-a}{4u^2}}\left(\frac{P_{a, u}(\ell)}{m}\right)\ell^k 
&= \frac{1}{(k+1)}\left(\frac{x-a}{4u^2}\right)^{k+1}\frac{C_{m}(P_{a, u})}{m}+ O\left(x^{(k+1)/2}+ m\left(\frac{x}{4u^2}\right)^k \right)\\
&= \frac{1}{(k+1)}\left(\frac{x}{4u^2}\right)^{k+1}\frac{C_{m}(P_{a, u})}{m}+ O\left(x^{(k+1)/2}+ m\left(\frac{x}{4u^2}\right)^k \right),
\end{aligned}
\end{equation}
since $|C_{m}(P_{a, u})|\leq m$ and
$$\left(\frac{x-a}{4u^2}\right)^{k+1}= \left(\frac{x}{4u^2}+O(1)\right)^{k+1}=\left(\frac{x}{4u^2}\right)^{k+1} \left(1+O\left(\frac{(k+1)u^2}{x}\right)\right). $$
Thus, inserting the estimate \eqref{InnerSum2} in \eqref{InnerSum1} we deduce that
$$ 
S_u(x)= \frac{x^{k+1}}{4(k+1)u^{k+2}}\sum_{\substack{2< a\leq 4u^2+2\\ d(a, u)\in \D}}\frac{C_{m}(P_{a, u})}{m}+ O\left(m\cdot u^{2-k} x^{k+1/2}\right).
$$ 
Write $m=2^{e_1} p_2^{e_2}\cdots p_n^{e_n}$, where $e_1\geq 0$ and $e_j\geq 1$ for $2\leq j\leq n$, and let $m_0$ be the squarefree part of $m/2^{e_1}$. Then, it follows from Lemma \ref{CharacterSum} that $C_{m}(P_{a, u})\neq 0$ only when $(u, m_0)=1$, and in this case we have
$$ 
\sum_{\substack{2< a\leq 4u^2+2\\ d(a, u)\in \D}}\frac{C_{m}(P_{a, u})}{m}=\frac{(-1)^{\omega(m_0)}}{m_0} \prod_{\substack{2\leq j\leq n\\ e_j \text{ is even }}} \left(1-\frac{2}{p_j}\right) \prod_{\substack{ p_j \mid u\\ e_j \text{ is even }}} \left(1+\frac{1}{p_j-2}\right) \sum_{\substack{2< a\leq 4u^2+2\\ d(a, u)\in \D}}b_{a, u}(m).
$$
Let 
$$B_m(u):=\sum_{\substack{2< a\leq 4u^2+2\\ d(a, u)\in \D}}b_{a, u}(m).$$
Write $u=2^{r_1}u_0$, where $r_1\geq 0$ and $u_0$ is odd. We shall distinguish two cases according to Lemma \ref{CharacterSum}. First, if $e_1=0$ then $b_{a, u}(m)=1$ and hence by Lemma \ref{Raulf} we obtain that 
$$ B_m(u)= N_0(u)+N_1(u)+N_2(u)= \begin{cases} 4\cdot 2^{\eta(u)} & \text{ if } r_1=0 \text{ or } 1,\\
8\cdot 2^{\eta(u)} & \text{ if } r_1=2,\\
16\cdot 2^{\eta(u)} & \text{ if } r_1\geq 3.\\
\end{cases}$$
On the other hand, if $e_1\geq 1$ then we have
$$B_m(u)= N_1(u)+(-1)^{e_1}N_2(u)=\begin{cases} 2(-1)^{e_1}2^{\eta(u)} & \text{ if } r_1=0,\\
4\big(1+(-1)^{e_1}\big)2^{\eta(u)}  & \text{ if } r_1\geq 3,\\
0& \text{ otherwise.}\\
\end{cases}$$
Furthermore, we define
$$F_m(u):=\frac{B_m(u)}{a(m)}\prod_{\substack{ p_j \mid u\\ e_j \text{ is even }}} \left(1+\frac{1}{p_j-2}\right) \text{ where } a(m):=\begin{cases} 4 & \text{ if } m \text{ is odd,}\\
2(-1)^{e_1}  & \text{ if } m \text{ is even.}\end{cases}
$$
Then, one can observe that $B_m(u)/a(m)=2^{\eta(u)}$ if $u$ is odd, and hence it follows that for any fixed $m$, the function $F_m(u)$ is multiplicative in $u$, and $F_m(u)\ll m\cdot d(u)$, where $d(u)=\sum_{\ell \mid u}1$ is the divisor function. 

Combining these estimates with \eqref{DiophantineChange1} and \eqref{DiophantineChange2} we deduce that
\begin{align*} 
\sum_{\substack{d\in \D\\ \ep \leq x}} \chi_d(m) d^{k/2}&= \frac{(-1)^{\omega(m_0)}a(m)}{4m_0}\prod_{\substack{2\leq j\leq n\\ e_j \text{ is even }}} \left(1-\frac{2}{p_j}\right) \frac{x^{k+1}}{(k+1)}\sum_{\substack{u\leq x^{1/10}\\ (u, m_0)=1}} \frac{F_m(u)}{u^{k+2}}\\
&+O\left(m\cdot x^{k+49/50}\right).\\
\end{align*}
Moreover, we have
$$\sum_{\substack{u\leq x^{1/10}\\ (u, m_0)=1}} \frac{F_m(u)}{u^{k+2}}= \sum_{\substack{u\geq 1\\ (u, m_0)=1}} \frac{F_m(u)}{u^{k+2}} +O\left(m\sum_{u>x^{1/10}}\frac{d(u)}{u^2}\right)= \sum_{\substack{u\geq 1\\ (u, m_0)=1}} \frac{F_m(u)}{u^{k+2}}+O\left(m\frac{\log x}{x^{1/10}}\right).
$$ Now, using that $F_m(u)$ is multiplicative, we obtain
\begin{align*} 
\sum_{\substack{u\geq 1\\ (u, m_0)=1}} \frac{F_m(u)}{u^{k+2}}
&=\left(\sum_{a=0}^{\infty} \frac{F_m(2^a)}{2^{a(k+2)}}\right)\prod_{\substack{2\leq j\leq n\\ e_j \text{ is even }}}\left(\sum_{a=0}^{\infty} \frac{F_m(p_j^a)}{p_j^{a(k+2)}}\right)\prod_{p\nmid 2m} \left(\sum_{a=0}^{\infty} \frac{F_m(p^a)}{p^{a(k+2)}}\right)\\
&=\left(\sum_{a=0}^{\infty} \frac{F_m(2^a)}{2^{a(k+2)}}\right)\prod_{\substack{2\leq j\leq n\\ e_j \text{ is even }}}\left(1+\sum_{a=1}^{\infty} \frac{2\left(1+\frac{1}{p_j-2}\right)}{p_j^{a(k+2)}}\right)\prod_{p\nmid 2m} \left(1+\sum_{a=1}^{\infty} \frac{2}{p^{a(k+2)}}\right)\\
&= \left(\sum_{a=0}^{\infty} \frac{F_m(2^a)}{2^{a(k+2)}}\right)\prod_{\substack{2\leq j\leq n\\ e_j \text{ is even }}}\left(1+ \frac{2(p_j-1)}{(p_j-2)(p_j^{k+2}-1)}\right)\prod_{p\nmid 2m} \left(1+ \frac{2}{p^{k+2}-1}\right).\\
\end{align*}
The first factor $\sum_{a=0}^{\infty} F_m(2^a)/2^{a(k+2)}$ depends on whether $m$ is even or odd. Indeed, in the first case (which corresponds to $e_1\geq 1$) we have
$$\sum_{a=0}^{\infty} \frac{F_m(2^a)}{2^{a(k+2)}}= 1+ 2\left(1+(-1)^{e_1}\right)\sum_{a=3}^{\infty}\frac{1}{2^{a(k+2)}}= 1+\frac{2\left(1+(-1)^{e_1}\right)}{4^{k+2}(2^{k+2}-1)}.$$
On the other hand, if $m$ is odd (that is $e_1=0$) then
$$ \sum_{a=0}^{\infty} \frac{F_m(2^a)}{2^{a(k+2)}}= 1+\frac{1}{2^{k+2}}+\frac{2}{4^{k+2}}+4 \sum_{a=3}\frac{1}{2^{a(k+2)}}= 1+\frac{1}{2^{k+2}}+\frac{2}{4^{k+2}}+ \frac{4}{4^{k+2}(2^{k+2}-1)}.$$
Combining these estimates completes the proof.
\end{proof}


\section{Moments of the class number: Proof of Theorem \ref{Main}}
We shall first define the constant $\mathcal{H}(k)$ that appears in the asymptotic formula of Theorem \ref{Main}. For $k\in \mathbb{R}$, let $d_k(m)$ be the $k$-th divisor function, which is the multiplicative function defined on prime powers by $d_k(p^a)=\Gamma(k+a)/(\Gamma(k)a!)$. Then define
\begin{equation}\label{TheH}
\mathcal{H}(k):=\mathcal{C}(k)\cdot \sum_{m=1}^{\infty} \frac{d_k(m)g_k(m)}{m},
\end{equation}
where $\mathcal{C}(k)$ and $g_k$ are defined in Theorem \ref{AsympChar}. Note that 
$\mathcal{H}(k)= \prod_{p} \mathcal{H}_p(k),$
where 
\begin{equation}\label{LocalTerms1}
\mathcal{H}_p(k)= \left(1 + O\left(\frac{1}{p^{k+2}}\right)\right)\sum_{a=0}^{\infty} \frac{d_k(p^a)g_k(p^a)}{p^a},
\end{equation}
by \eqref{TheC}.  We begin by proving the estimates for $\mathcal{H}_p(k)$ that are stated in Theorem \ref{Main}.
\begin{lem}\label{PrimePart}
Let $k$ be a large positive real number. Then we have
$$\mathcal{H}_2(k)=\frac12+ O\left(\frac{2^k}{3^k}\right),$$
and for $p\geq 3$, 
$$ \mathcal{H}_p(k)=\left(\left(\frac12-\frac{3}{2p}\right)\left(1-\frac{1}{p}\right)^{-k}+ \left(\frac12-\frac{1}{2p}\right)\left(1+\frac{1}{p}\right)^{-k}+\frac{2}{p}\right)\left(1+ O\left(\frac{1}{(p-1)^k}\right)\right).
$$
\end{lem}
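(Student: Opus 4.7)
My plan is to work directly from the identity in \eqref{LocalTerms1},
$$\mathcal{H}_p(k) = \left(1 + O\!\left(\tfrac{1}{p^{k+2}}\right)\right)\sum_{a=0}^{\infty}\frac{d_k(p^a)\,g_k(p^a)}{p^a},$$
and recall the generating function identity $\sum_{a\ge 0} d_k(p^a)x^a = (1-x)^{-k}$, which at $x=\pm 1/p$ yields
$$\sum_{a\text{ even}}\frac{d_k(p^a)}{p^a} = \tfrac{1}{2}\bigl[(1-1/p)^{-k}+(1+1/p)^{-k}\bigr],\qquad \sum_{a\text{ odd}}\frac{d_k(p^a)}{p^a} = \tfrac{1}{2}\bigl[(1-1/p)^{-k}-(1+1/p)^{-k}\bigr].$$
These are the skeleton identities needed for both cases.

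For $p=2$: I would simplify the definition of $g_k(2^a)$ for $a\ge 1$. The denominator $D=1+2^{-(k+2)}+2\cdot 4^{-(k+2)}+4\cdot 4^{-(k+2)}(2^{k+2}-1)^{-1}$ satisfies $1/D = 1+O(2^{-k})$, while the parity-dependent numerator equals $1$ for odd $a$ and $1+O(8^{-k})$ for even $a$. Consequently $g_k(2^a) = \tfrac{(-1)^a}{2D}\bigl(1+O(8^{-k})\mathbf 1_{a\text{ even}}\bigr)$. Plugging into the sum,
$$\sum_{a\ge 0}\frac{d_k(2^a)g_k(2^a)}{2^a} = 1+\frac{1}{2D}\bigl[(3/2)^{-k}-1\bigr] + O(4^{-k}),$$
because the error from the even-$a$ correction is bounded by $O(8^{-k})\sum_a d_k(2^a)/2^a = O(8^{-k}\cdot 2^k)=O(4^{-k})$. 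Since $1-1/(2D) = \tfrac{1}{2}+O(2^{-k})$ and $(2/3)^k$ dominates $2^{-k}$, this collapses to $\tfrac{1}{2}+O((2/3)^k)$; the outer factor $(1+O(4^{-k}))$ is absorbed. This gives the first claim.

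For $p\ge 3$: The crucial observation is that the definition makes $g_k(p^a)$ depend only on the parity of $a\ge 1$. Write $g_o := -\tfrac{1}{p}(1+O(1/p^{k+1}))$ and $g_e := (1-2/p)(1+O(1/p^{k+1}))$ for the odd and even values. Splitting the sum using the identities above and grouping terms by the three quantities $1$, $(1-1/p)^{-k}$, $(1+1/p)^{-k}$, I obtain
$$\sum_{a\ge 0}\frac{d_k(p^a)g_k(p^a)}{p^a} = (1-g_e) + \frac{g_o+g_e}{2}(1-1/p)^{-k} + \frac{g_e-g_o}{2}(1+1/p)^{-k}.$$
The three coefficients work out algebraically to $\tfrac{2}{p}$, $\tfrac{1}{2}-\tfrac{3}{2p}$ and $\tfrac{1}{2}-\tfrac{1}{2p}$, respectively, exactly matching the stated main term. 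The errors in $g_o, g_e$ contribute at most $O\bigl((1-1/p)^{-k}/p^{k+1}\bigr)$ absolutely, which is a multiplicative $O(1/p^{k+1})$ on the main term; this, together with the outer $O(1/p^{k+2})$ from \eqref{LocalTerms1}, is comfortably absorbed into the stated multiplicative error $O(1/(p-1)^k)$.

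There is no serious obstacle here; the proof is essentially a verification. The only mildly delicate point is the bookkeeping in the $p=2$ case, where one must notice that the naive estimate $g_k(2^a)-(-1)^a/2 = O(2^{-k})$ would give an error of size $O(1)$ in the sum, and so one must exploit the fact that the leading $2^{-k}$ correction is independent of $a$ (both arising from the common factor $1/D$), so that it combines with the closed-form sum $\sum d_k(2^a)(-1/2)^a = (2/3)^k$ rather than being summed trivially.
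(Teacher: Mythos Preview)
Your argument is correct and follows essentially the same line as the paper's: both split the sum $\sum_{a\ge 0} d_k(p^a)g_k(p^a)/p^a$ by the parity of $a$ and evaluate via $\sum_a d_k(p^a)(\pm 1/p)^a=(1\mp 1/p)^{-k}$, and in the $p=2$ case both exploit that the leading correction in $g_k(2^a)$ is $(-1)^a$ times an $a$-independent constant (your treatment of this point is in fact more explicit than the paper's). One tiny caveat: for $p=3$ the coefficient $\tfrac12-\tfrac{3}{2p}$ of $(1-1/p)^{-k}$ vanishes and the main term is $\asymp 1$, so the multiplicative error there is $O\bigl((3/2)^k/3^{k+1}\bigr)=O(1/2^k)$ rather than $O(1/p^{k+1})$ as you assert---but this is still within the stated $O(1/(p-1)^k)$, so your conclusion is unaffected.
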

\begin{proof} Observe that for any prime $p$, and real numbers $k$ and $t$ such that $|t|<p$  we have
\begin{equation}\label{CompMulti}
\sum_{a=0}^{\infty} \frac{d_k(p^a)t^a}{p^a}=\left(1-\frac{t}{p}\right)^{-k}.
\end{equation}
We first consider the case $p=2$. In this case we have
\begin{align*}
\sum_{a=0}^{\infty} \frac{d_k(2^a)g_k(2^a)}{2^a}
&= 1+\sum_{a=1}^{\infty}\frac{d_k(2^a)}{2^a} \left(\frac{(-1)^a}{2} +O\left(\frac{1}{2^k}\right)\right)\\
&= 1+\left(\frac12+ O\left(\frac{1}{2^k}\right)\right)\left(\sum_{a=0}^{\infty}\frac{d_k(2^a)(-1)^a}{2^a}-1\right)= \frac12+ O\left(\frac{2^k}{3^k}\right).
\end{align*}
Next, if $p>2$ then
\begin{align*}
\sum_{a=0}^{\infty} \frac{g_k(p^a)d_k(p^a)}{p^a}&= 1-\frac{1}{p}\sum_{\substack{a\geq 1\\ a \text{ odd }}}\frac{d_k(p^a)}{p^a}+ \left(1-\frac{2}{p}\right)\sum_{\substack{a\geq 1\\ a \text{ even }}}\frac{d_k(p^a)}{p^a}+ O\left(\frac{1}{p^k}\sum_{a=1}^{\infty}\frac{d_k(p^a)}{p^a}\right)\\
&= 1-\frac{1}{p}\sum_{a=1}^{\infty}\frac{d_k(p^a)}{p^a}\left(\frac{1-(-1)^a}{2}\right)+ \left(1-\frac{2}{p}\right)\sum_{a=1}^{\infty}\frac{d_k(p^a)}{p^a}\left(\frac{1+(-1)^a}{2}\right)\\
& \ \ \ \ \ \ +O\left(\frac{1}{(p-1)^k}\right)\\
&=\left(\frac12-\frac{3}{2p}\right)\left(1-\frac{1}{p}\right)^{-k}+ \left(\frac12-\frac{1}{2p}\right)\left(1+\frac{1}{p}\right)^{-k}+\frac{2}{p}+ O\left(\frac{1}{(p-1)^k}\right),
\end{align*}
by \eqref{CompMulti}. The lemma follows upon combining these estimates with \eqref{LocalTerms1}.

\end{proof}

Let $k$ be a positive real number. By the class number formula \eqref{ClassNumberFormula} we have
$$ \sums h(d)^k= \sums \frac{L(1,\chi_d)^k d^{k/2}}{(\log \ep)^k}.$$
Hence, in order to find an asymptotic formula for this moment it suffices to estimate the sum $\sum_{\ep\leq x} L(1,\chi_d)^k d^{k/2}$. 
Indeed, an easy application of partial summation together with the following result imply Theorem \ref{Main}.
\begin{thm}\label{UnweightedMoments}
Let $x$ be large. There exists a positive constant $B$ such that uniformly for all real numbers $k$ with $0<k \leq \log x/(B\log_2 x\log_3 x)$ we have $$\sums L(1,\chi_d)^k d^{k/2} = \frac{\mathcal{H}(k)}{k+1}\cdot x^{k+1} + O\left(x^{k+1}\exp\left(-\frac{\log x}{150\log_2 x}\right)\right).$$
\end{thm}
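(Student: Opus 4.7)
The plan is to approximate $L(1,\chi_d)^k$ by a short Dirichlet polynomial, exchange the order of summation, and then apply Theorem \ref{AsympChar} term by term. The computation follows the Granville--Soundararajan / Lamzouri strategy for moments of $L(1,\chi_d)$, but with the discriminants ordered by $\ep$ rather than by $d$.

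The first step is to prove an effective truncation of $L(1,\chi_d)^k$. Choose parameters $y = \exp(\log x/(B_1 \log_2 x))$ and $M = x^{1/60}$, say, with $B_1$ a suitable absolute constant, and write $\mathcal{M}(y,M) = \{m\leq M: P^+(m)\leq y\}$. I would show that there is an exceptional set $\mathcal{E}\subset \D$ with
$$\#\{d\in\mathcal{E}: \ep\leq x\} \ll x \exp\!\left(-\frac{\log x}{100\log_2 x}\right),$$
such that for every $d\notin\mathcal{E}$ with $\ep\leq x$ one has
$$L(1,\chi_d)^k = \sum_{m\in\mathcal{M}(y,M)} \frac{d_k(m)\chi_d(m)}{m} + O\!\left(\exp\!\left(-\frac{\log x}{120\log_2 x}\right)\right) \cdot (\log x)^k.$$
The approximation $L(1,\chi_d) \approx L(1,\chi_d;y) := \prod_{p\leq y}(1-\chi_d(p)/p)^{-1}$ for all but a thin set of $d$ is a standard zero-density style result; the multiplicative error then gets magnified by $k$, which is acceptable as long as $k y$ is well-controlled relative to our target error. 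Expanding $L(1,\chi_d;y)^k$ as a Dirichlet series and truncating at $M$ introduces a tail of size $\sum_{m > M, P^+(m)\leq y} d_k(m)/m$, which is negligible provided $y^k \leq M^{o(1)}$, i.e.\ precisely the range $k \leq \log x/(B \log_2 x \log_3 x)$. The contribution of $d \in \mathcal{E}$ to $\sum L(1,\chi_d)^k d^{k/2}$ is controlled by H\"older's inequality combined with a crude moment bound of the form $\sum_{\ep\leq x} L(1,\chi_d)^{2k} \ll x (\log x)^{O(k^2)}$, which can be proved unconditionally by the same truncation method applied to a slightly larger exponent.

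Once the approximation is in hand, I exchange summation:
$$\sums L(1,\chi_d)^k d^{k/2} = \sum_{m\in\mathcal{M}(y,M)} \frac{d_k(m)}{m} \sums \chi_d(m) d^{k/2} + (\text{error}),$$
and apply Theorem \ref{AsympChar} to the inner sum. The main term becomes
$$\frac{\mathcal{C}(k)\, x^{k+1}}{k+1}\sum_{m\in\mathcal{M}(y,M)} \frac{d_k(m)\,g_k(m)}{m}.$$
Extending this sum to all $m\geq 1$ introduces a tail of size $\sum_{m>M \text{ or } P^+(m)>y}d_k(m)|g_k(m)|/m$, which by the Euler product factorization of $\mathcal{H}(k)$ and the bounds on $\mathcal{H}_p(k)$ from Lemma \ref{PrimePart} is bounded by $\mathcal{H}(k)\cdot\exp(-\log x/(150\log_2 x))$ in the relevant range of $k$; here one uses $d_k(p^a)g_k(p^a) = O(d_k(p^a)/p)$ for $p\nmid 2m$ so the Euler product converges absolutely and geometrically past $y$. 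The Theorem \ref{AsympChar} error term $O(m\, x^{k+49/50})$ contributes at most $M\cdot \sum_{m\leq M}d_k(m)/m \cdot x^{k+49/50} \ll x^{k+49/50+1/60+o(1)}$, which is absorbed.

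The principal obstacle is the first step: proving the effective truncation of $L(1,\chi_d)^k$ uniformly for $k$ as large as $\log x/(B\log_2 x\log_3 x)$. Pointwise, $L(1,\chi_d)$ can be as large as $e^{\gamma}\log_2 d$ and as small as $(\log_2 d)^{-1}$, so taking $k$-th powers amplifies any truncation error dramatically. The key technical input, following \cite{GrSo} and \cite{La1}, is to handle the approximation at the level of the Euler product $L(1,\chi_d;y)$ rather than the Dirichlet series, and to carefully estimate the size of the exceptional set using a large-sieve-type inequality for the character sums $\sum_{\ep\leq x}\chi_d(m)$ which follows from Theorem \ref{AsympChar} applied with $k=0$ (or with a modest value of $k$) together with orthogonality. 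Balancing $y$, $M$ and the size of $\mathcal{E}$ against each other forces precisely the admissible range $k\leq \log x/(B\log_2 x\log_3 x)$.
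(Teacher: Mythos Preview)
Your overall strategy---approximate $L(1,\chi_d)^k$ by a short Dirichlet sum, exchange the order of summation, and invoke Theorem \ref{AsympChar}---is the same as the paper's. The genuine gap is in your control of the exceptional set $\mathcal{E}$.

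You claim $|\{d\in\mathcal{E}:\ep\leq x\}|\ll x\exp(-\log x/(100\log_2 x))$ and propose to bound the contribution of $\mathcal{E}$ via H\"older together with a crude bound $\sum_{\ep\leq x} L(1,\chi_d)^{2k}\ll x(\log x)^{O(k^2)}$. This does not close for the full range of $k$. Even replacing $O(k^2)$ by the sharper $O(k)$ coming from the trivial pointwise bound $L(1,\chi_d)\ll\log x$, the resulting factor $(\log x)^{O(k)}=\exp(O(k\log_2 x))=\exp(O(\log x/\log_3 x))$ at $k\asymp\log x/(\log_2 x\log_3 x)$ overwhelms the saving $\exp(-\log x/(200\log_2 x))$ that your bound on $|\mathcal{E}|$ provides. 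Moreover, the suggestion that a large-sieve inequality for $\sum_{\ep\leq x}\chi_d(m)$ can be read off from Theorem \ref{AsympChar} with $k=0$ is not correct: that result is a first-moment asymptotic with main term $\mathcal{C}(0)g_0(m)x$, not a mean-square bound of large-sieve type.

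The paper closes this gap by proving a \emph{power-saving} bound on the exceptional set. Lemma \ref{ZeroDensity} shows that at most $O(x^{1-1/20})$ discriminants $d\in\D$ with $\ep\leq x$ have $L(s,\chi_d)$ vanishing in a fixed rectangle near $s=1$; the proof combines Heath-Brown's zero-density estimate over fundamental discriminants with the Pell-equation parametrization (Lemma \ref{Correspondence}) to count how many pairs $(t,u)$ lie above a given bad fundamental discriminant. With a power saving in hand, the trivial pointwise bound $L(1,\chi_d)^k d^{k/2}\ll (\log x)^k x^k = x^{k+o(1)}$ already makes the exceptional contribution $\ll x^{k+1-1/20+o(1)}$, with no need for H\"older or an auxiliary $2k$-th moment.

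A secondary difference is the truncation scheme. The paper uses a single smooth cutoff $e^{-n/y}$ with $y=x^{1/60}$, quoting Proposition \ref{ShortApproxL} for the approximation of $L(1,\chi_d)^k$ on the non-exceptional set. This streamlines the tail estimate: one bounds $\sum_n d_k(n)|g_k(n)|(1-e^{-n/y})/n$ via $1-e^{-n/y}\ll (n/y)^{\alpha}$ with $\alpha=1/\log_2 x$, together with the pointwise inequality $|g_k(n)|\leq 1/n_0$ (where $n_0$ is the squarefree part of $n$), an input you did not isolate. Your two-parameter scheme (smoothness threshold $y$, length threshold $M$) could in principle be made to work, but the exceptional-set issue above is the essential obstacle.
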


For any $k\in \mathbb{R}$, and $\re(s)>1$ we have 
$$L(s, \chi_d)^k =\sum_{n=1}^{\infty} \frac{d_k(n)}{n}\chi_d(n).$$
We now recall some standard bounds for the divisor function $d_k(n)$. We have 
$|d_k(n)|\leq d_{|k|}(n)\leq d_{\ell}(n)$
for any integer $\ell\geq |k|$, and $d_{\ell}(mn)\leq d_{\ell}(m)d_{\ell}(n)$ for any positive integers $\ell,m,n$.  Furthermore
for $\ell\in {\Bbb N}$, and $y>3$ we have that
\begin{equation}\label{BoundDivisork0}
 d_{\ell}(n)e^{-n/y}\leq
e^{\ell/y}\sum_{a_1...a_{\ell}=n}e^{-(a_1+...+a_{\ell})/y},
\end{equation} and so
\begin{equation}\label{BoundDivisork}
 \sum_{n=1}^{\infty}\frac{d_{\ell}(n)}{n}e^{-n/y}\leq \left(e^{1/y}
\sum_{a=1}^{\infty}\frac{e^{-a/y}}{a}\right)^{\ell}\leq (\log
3y)^{\ell}.
\end{equation}
One of the key ingredients in the proof of Theorem \ref{UnweightedMoments} is an approximation of $L(1,\chi_d)^k$ by (essentially) a very short Dirichlet polynomial if $L(s,\chi_d)$ has no zeros in a small region to the left of the line $\re(s)=1$. This was proved in Proposition 3.3 of \cite{DaLa}. 
\begin{pro}[Proposition 3.3 of \cite{DaLa}]\label{ShortApproxL}
Let $d$ be a large positive discriminant and $0<\epsilon<1/2$ be fixed. Let $y$ be a real number such that $\log d/\log_2 d\leq \log y\leq \log d$. Furthermore, assume that $L(s,\chi_d)$ has no zeros inside the rectangle $\{s:1-\epsilon <\textup{Re}(s)\leq 1 \text{ and } |\textup{Im}(s)|\leq 2(\log d)^{2/\epsilon}\}$.
Then for any positive real number $k$ such that $k\leq  \log y/(4\log_2 d \log_3 d)$ we have
$$L(1,\chi_d)^k=\sum_{n=1}^{\infty}\frac{d_k(n)\chi_d(n)}{n}e^{-n/y}+O_{\epsilon}\left(\exp\left(-\frac{\log y}{2\log_2 d}\right)\right).$$
\end{pro}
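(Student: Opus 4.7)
The plan is a Mellin–Perron contour-shift argument. Starting from the Mellin identity $e^{-u}=\frac{1}{2\pi i}\int_{(2)}\Gamma(s)u^{-s}\,ds$, absolute convergence on $\re(s)=2$ allows the interchange of sum and integral, yielding
$$\sum_{n=1}^{\infty}\frac{d_k(n)\chi_d(n)}{n}e^{-n/y}=\frac{1}{2\pi i}\int_{(2)}L(1+s,\chi_d)^k\,\Gamma(s)\,y^s\,ds.$$
The goal is to shift this contour to $\re(s)=-\delta$ with $\delta:=1/\log_2 d$; the only singularity in between is the simple pole of $\Gamma(s)$ at $s=0$, whose residue is exactly $L(1,\chi_d)^k$. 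This gives the main term in the claim; everything else is error.

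Before shifting, I would truncate the contour at height $T:=2(\log d)^{2/\epsilon}$. The missing tails and the two horizontal connecting segments at $\im(s)=\pm T$ contribute $O(e^{-T/2})$ thanks to the exponential decay $|\Gamma(\sigma+it)|\ll e^{-|t|}$ uniformly in bounded $\sigma$-strips (combined with even the trivial polynomial-in-$\log d$ bounds on $|L(1+s,\chi_d)|^k$ available in the zero-free region), and such an error is far smaller than the target. What remains is the vertical integral
$$I\;:=\;\frac{1}{2\pi i}\int_{-\delta-iT}^{-\delta+iT}L(1+s,\chi_d)^k\,\Gamma(s)\,y^s\,ds,$$
which I need to show is $O_\epsilon\bigl(\exp(-\log y/(2\log_2 d))\bigr)$.

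The crucial input is a bound for $|L(1+s,\chi_d)|$ along the shifted contour, where $\re(1+s)=1-\delta$ and $|\im(1+s)|\le T$. On the line $\re(s)=1+1/\log d$ the Euler product yields $|\log L(s,\chi_d)|\le \log_2 d+O(1)$. The zero-free hypothesis makes $\log L(s,\chi_d)$ analytic throughout the rectangle $\{1-\epsilon<\re(s)\le 1,\,|\im(s)|\le T\}$, and a Borel–Caratheodory / Hadamard three-circles argument inside this rectangle (using that the rectangle has fixed width $\epsilon>0$) then propagates the boundary bound inward, producing
$$|L(1+s,\chi_d)|\;\ll_\epsilon\;\log_2 d$$
uniformly on the shifted contour. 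Raising to the $k$-th power and using that $\int|\Gamma(-\delta+it)|\,dt=O(1)$, I obtain
$$|I|\;\ll\;(\log_2 d)^{Ck}\cdot y^{-\delta}\;=\;\exp\!\Bigl(Ck\log_3 d-\tfrac{\log y}{\log_2 d}\Bigr)$$
for some absolute $C>0$. The hypothesis $k\le\log y/(4\log_2 d\log_3 d)$ then makes $Ck\log_3 d\le \log y/(2\log_2 d)$, giving the required error bound.

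The main obstacle is the Borel–Caratheodory step: extracting the sharp $\log_2 d$ bound on $|L(s,\chi_d)|$ inside the zero-free rectangle, rather than the much cruder $\log d$ bound which would come for free from standard estimates. Any weaker bound would tighten the admissible range to $k\ll\log y/(\log_2 d)^2$, destroying the $\log_3 d/\log_2 d$ saving present in the hypothesis. Everything else—the Mellin identity, the contour shift, the Gamma-factor tail estimates—is routine.
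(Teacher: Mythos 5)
The paper does not prove this statement; it is quoted verbatim as Proposition 3.3 of \cite{DaLa}, so there is no internal proof to compare against and your proposal has to stand on its own. Its skeleton (Mellin identity, truncation, shift past the simple pole of $\Gamma$ at $s=0$ picking up $L(1,\chi_d)^k$, then a bound on the shifted vertical line) is indeed the right one, and your treatment of the tails and horizontal segments is fine. But there is a genuine gap exactly at the step you yourself flag as the crux. Borel--Carath\'eodory plus Hadamard three circles, applied with the boundary input $|\log L(s,\chi_d)|\leq \log_2 d+O(1)$ on $\re(s)=1+1/\log d$ and $|\log L|\ll\log d$ deeper in the zero-free rectangle, propagates a bound on $\log L$, not on $L$: on $\re(s)=1-\delta$ with $\delta=1/\log_2 d$ it yields $|\log L(s,\chi_d)|\ll_{\epsilon}\log_2 d$, hence only $|L(s,\chi_d)|\leq (\log d)^{O_{\epsilon}(1)}$. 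That is precisely the ``cruder $\log d$ bound'' you say you must avoid, and with it your final display becomes $\exp(Ck\log_2 d-\log y/\log_2 d)$, forcing $k\ll\log y/(\log_2 d)^2$ and losing the stated range. The bound $|L(1+s,\chi_d)|\ll_{\epsilon}\log_2 d$ cannot be extracted from the convexity machinery alone, because the provable sup of $|\log L|$ on the innermost circle is already $\log_2 d$ (coming from $\log\zeta(1+1/\log d)$), and three circles can never beat its input on the small circle.

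The missing ingredient is a second, separate contour-shift argument applied to $\log L$ (or $L'/L$) itself: using the zero-free rectangle together with Borel--Carath\'eodory one first shows that for $\re(s)\geq 1-1/\log_2 d$ and $|\im(s)|\leq (\log d)^{2/\epsilon}$ one has $\log L(s,\chi_d)=\sum_{n\leq z}\Lambda(n)\chi_d(n)n^{-s}(\log n)^{-1}+O_{\epsilon}(1)$ with $z=(\log d)^{O(1/\epsilon)}$. The triangle inequality applied to this short sum gives $|\log L(1-\delta+it,\chi_d)|\leq C_{\epsilon}\log_3 d$, whence $|L(1+s,\chi_d)|^k\leq\exp(C_{\epsilon}k\log_3 d)$ on the shifted line; only then does the hypothesis $k\leq\log y/(4\log_2 d\log_3 d)$ deliver an error $\exp(-\log y/(2\log_2 d))$. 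This is how the argument in \cite{DaLa} (following \cite{GrSo} and \cite{La0}) actually runs. Two smaller points: $\int|\Gamma(-\delta+it)|\,dt$ is $O(\log(1/\delta))=O(\log_3 d)$ rather than $O(1)$ because of the pole at $0$ (harmless here), and your final inequality $Ck\log_3 d\leq\log y/(2\log_2 d)$ silently requires the unspecified constant $C$ to be at most $2$, so the dependence of the admissible range on $\epsilon$ needs to be tracked.
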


Our last ingredient is a bound on the number of discriminants $d\in \D$ with $\ep\leq x$ and such that $L(s, \chi_d)$ has a zero in the rectangle
$$ \mathcal{R}(x):=\left\{s\in \mathbb{C}: 9/10<\text{Re}(s)<1\text{ and }|\im(s)|\leq (\log x)^{40}\right\}.$$
\begin{lem}\label{ZeroDensity}
There are at most $O(x^{1-1/20})$ discriminants $d\in \D$ such that $\ep\leq x$ and $L(s,\chi_d)$ has a zero in the rectangle $\mathcal{R}(x)$.
\end{lem}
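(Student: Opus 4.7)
The plan is to combine a standard zero-detection argument with the quadratic large sieve, after reducing to the $(t, u)$-parametrization of Lemma \ref{Correspondence}. Since $\sqrt{d} \leq \varepsilon_d \leq x$, every relevant $d$ lies in $[1, x^2]$, and by \eqref{RaulfBound} the pairs with $u > x^{1/10}$ contribute at most $O(x^{49/50})$ discriminants, well within the desired error; so one may restrict to the parametrized set of $(t, u)$ with $2 < t \leq x$, $u \leq x^{1/10}$, and $d(t, u) = (t^2-4)/u^2 \in \mathcal{D}$.

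For each bad discriminant $d$, fix a zero $\rho = \beta + i\gamma$ of $L(s, \chi_d)$ with $\beta > 9/10$ and $|\gamma| \leq (\log x)^{40}$, and introduce a mollifier $M_z(s, \chi_d) = \sum_{n \leq z}\mu(n)\chi_d(n)n^{-s}$ of length $z = x^{\theta}$, with $\theta$ to be chosen. Since $L(\rho, \chi_d)M_z(\rho, \chi_d) = 0$, one has
$$1 + \sum_{n > z} c(n;z)\chi_d(n) n^{-\rho} = 0,\qquad |c(n;z)| \leq d(n).$$
Truncating at $n \leq x^{C}$ (the tail being handled by Perron's formula and standard convexity estimates for $L(s,\chi_d)$) and dyadically decomposing the remaining range produces some dyadic $N \in [z, x^{C}]$ such that
$$\Big|\sum_{N < n \leq 2N} c(n;z)\chi_d(n) n^{-i\gamma}\Big| \gg \frac{N^{\beta}}{(\log x)^{A}}.$$

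Discretizing $\gamma$ on a $(\log x)^{-A}$-net (at cost $(\log x)^{O(1)}$), squaring, and summing over the bad $d$'s via Heath-Brown's quadratic large sieve for real characters (after the standard reduction to odd squarefree indices, and writing $d = d_0 m^2$ with $d_0$ squarefree to handle non-squarefree discriminants) yields
$$|\mathcal{B}(x)|\cdot \frac{N^{2\beta}}{(\log x)^{O(1)}} \ll x^\epsilon (x^2+N)\sum_n |c(n;z)|^2 \ll x^\epsilon (x^2+N) N,$$
where $\mathcal{B}(x)$ denotes the set of bad $d$. Hence $|\mathcal{B}(x)| \ll x^\epsilon (x^2+N)/N^{4/5}$, and choosing $z = N$ of size $x^{21/16 + \delta}$ for a small fixed $\delta > 0$ absorbs the $x^{\epsilon}$ loss and gives the claimed bound $O(x^{1-1/20})$.

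The main obstacle is the tension in the choice of $z$: it must be large enough that the large sieve at length $N \geq z$ is still effective, yet small enough that the tail of $LM_z$ can be controlled on the line $\mathrm{Re}(s) = \beta$. A cleaner and sharper route would be to construct a custom large sieve for the thin family $\{d : \varepsilon_d \leq x\}$ directly from the parametrization: for each fixed $(a, u)$, $\chi_{d(t,u)}(n) = \big(P_{a, u}(\ell)/n\big)$ with $t = a + 4u^2\ell$, and Weil-type bounds on the Jacobi-symbol sum $\sum_\ell \big(P_{a,u}(\ell)/n\big)$ yield a large sieve with $X = x$ in place of $x^2$, leading to an appreciably better exponent.
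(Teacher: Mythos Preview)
Your proposal has a genuine gap at the step where you ``sum over the bad $d$'s via Heath-Brown's quadratic large sieve.'' That large sieve applies to \emph{distinct} primitive real characters, i.e., to squarefree moduli. Writing $d = d_0 m^2$ reduces $\chi_d$ to the primitive character $\chi_{d_0}$, but then the left-hand side of the large-sieve inequality is indexed by the \emph{fundamental} discriminants $d_0$, not by the $d$'s themselves. What the argument actually delivers is a bound on the number $M$ of bad fundamental discriminants $d_0 \leq x^2$; to reach the lemma you must still control, for each such $d_0$, how many $d \in \D$ with $\ep \leq x$ have fundamental part $d_0$. Nothing in your sketch addresses this multiplicity, and the trivial bound $m \leq x/\sqrt{d_0}$ is far too crude. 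Your inequality $|\mathcal{B}(x)|\cdot N^{2\beta}(\log x)^{-O(1)} \ll x^\epsilon (x^2+N)N$ is therefore unjustified as written: the right-hand side comes from a sum over at most $x^2$ squarefree moduli, while the left counts possibly many repetitions of the same character.

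This multiplicity question is exactly the crux of the paper's proof. The paper does not re-run zero detection at all; it simply quotes Heath-Brown's zero-density estimate to get $M \ll x^{3/5}$ bad fundamental discriminants $d_0 \leq x^2$. The substantive step is then arithmetic: for a fixed bad $d_0$ and a fixed $u \leq x^{\delta}$, any $t \leq x$ with $\widetilde{d(t,u)} = d_0$ gives $t^2 - 4 = d_0 (u\ell)^2$ for some $\ell$, so $(t + u\ell\sqrt{d_0})/2$ is a power of the fundamental unit $\varepsilon_{d_0 u^2}$. Since $t \leq x$ there are only $O(\log x)$ such powers, so the multiplicity per pair $(d_0, u)$ is $O(\log x)$, yielding $|\mathcal{B}(x)| \ll x^{\delta} M \log x + x^{1-\delta/5}$ and the choice $\delta = 1/4$ finishes. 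Your first paragraph sets up precisely the $(t,u)$-parametrization needed for this Pell-equation argument, but you never return to exploit it; once you insert that step, your entire mollifier--large-sieve machinery becomes redundant and can be replaced by the off-the-shelf zero-density bound.
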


\begin{proof}
First, by Lemma \ref{Correspondence}  we have
\begin{align*}
&\left|\{(d, n): d\in \mathcal{D}, n\in \mathbb{N}, \ep^n\leq x \text{ and } L(s,\chi_d) \text{ has a zero in } \mathcal{R}(x)\}\right|\\
= &\left|\{(t, u): u\in \mathbb{N},  2<t<X \text{ and } L(s,\chi_{d(t, u)}) \text{ has a zero in } \mathcal{R}(x)\}\right|.
\end{align*} 
The contribution of the pairs $(d, n)$ such that $n\geq 2$, $d\in \D$ and $\ep^n\leq x$ is negligible. Indeed, we have 
$$ \sum_{d\in \D}\sum_{\substack{n\geq 2 \\ \ep^n\leq x}}1=\sum_{2\leq n\ll \log x}\sum_{\substack{d\in \D \\ \ep\leq x^{1/n}}}1\ll x^{\frac12}\log x,$$
by \eqref{SarnakAsymp}. Hence, combining this bound with \eqref{RaulfBound} we deduce that
\begin{align*}
&\left|\{d\in \mathcal{D}:  \ep\leq x \text{ and } L(s,\chi_d) \text{ has a zero in } \mathcal{R}(x)\}\right|\\
= &\left|\{(t, u): u\leq x^{\delta},  2<t<x \text{ and } L(s,\chi_{d(t, u)}) \text{ has a zero in } \mathcal{R}(x)\}\right| +O\left(x^{1-\delta/5}\right).
\end{align*} 
In order to bound the number of pairs $(t, u)$ for which $L(s,\chi_{d(t, u)})$ has a zero in $\mathcal{R}(x)$, we shall use the following zero-density result of Heath-Brown \cite{He}, which states that for $1/2<\sigma<1$ and
 any $\epsilon>0$ we have
\begin{equation}\label{Heath}
\sumf_{|d|\leq x} N(\sigma,T, \chi_d)\ll (xT)^{\epsilon}x^{3(1-\sigma)/(2-\sigma)}T^{(3-2\sigma)/(2-\sigma)},
\end{equation}
where $N(\sigma, T, \chi_d)$ is the number of zeros $\rho$ of $L(s,\chi_d)$ with $\re(\rho)\geq \sigma$ and $|\im(\rho)|\leq T$, and $\sumf$ indicates that the sum is over fundamental discriminants.

Let $T=(\log x)^{40}$, and $M$ be the number of positive fundamental discriminants $d\leq x^2$  such that $L(s,\chi_d)$ has a zero in the rectangle $\mathcal{R}(x)$. Then, it follows from \eqref{Heath} that 
\begin{equation}\label{BoundM}
M \leq \sumf_{|d|\leq x^2} N\left(9/10, T, \chi_d\right)\ll x^{3/5}.
\end{equation}

Recall that for every discriminant $d\in \D$, there exists a unique fundamental discriminant $\widetilde{d}$ such that $d=\widetilde{d}\cdot\ell^2$ for some $\ell\in \mathbb{N}$. Moreover, the zeros of $L(s,\chi_d)$ and those of $L(s, \chi_{\widetilde{d}})$ are the same in the half-plane $\text{Re}(s)>0$.  
Let $d_1, \cdots, d_M$ be the positive fundamental discriminants $d\leq x^2$ for which $L(s,\chi_d)$ has a zero in $\mathcal{R}(x)$, and  fix $u\leq x^{\delta}$. If $2<t\leq x$ and $L(s,\chi_{d(t, u)})$  has a zero in $\mathcal{R}(x)$ then $d(t, u)\leq x^2$ and hence there exists a unique $j\in \{1,2, \dots, M\}$ such that $d(t, u)=d_j \cdot \ell^2$ for some $\ell\in \mathbb{N}$. This implies that $t^2-4= d_j u^2 \ell^2$, which shows that $(t, \ell)$ is a solution to Pell's equation $x^2-d_ju^2y^2=4$, and therefore we must have 
$$\frac{t+\ell\sqrt{d_j}u}{2}=  \varepsilon_{d_j u^2}^n, \text{ for some } n\in \mathbb{N},$$
where $\varepsilon_{d_j u^2}$ is the fundamental unit associated to the discriminant $d_j u^2$. Since $\varepsilon_{d_j u^2}^n\leq t$ we obtain that
$$|\{ (t, \ell):  2<t\leq x, \ell\geq 1, \text{ and } t^2-4= d_j u^2 \ell^2\}|\leq  |\{n \in \mathbb{N}: \varepsilon_{d_j u^2}^n\leq x\}\ll \log x.
$$
Thus, we deduce that
$$ \left|\{d\in \mathcal{D} : \ep\leq x \text{ and } L(s,\chi_d) \text{ has a zero in } \mathcal{R}(x)\}\right|\ll x^{\delta} M\log x+ x^{1-\frac{\delta}{5}}.$$
Choosing $\delta=1/4$ and using the bound \eqref{BoundM} completes the proof. 

\end{proof}

We are now ready to prove Theorem \ref{UnweightedMoments}.

\begin{proof} [Proof of Theorem \ref{UnweightedMoments}]
Let $d\in \D$ such that $\ep\leq x$. Then we have $d\leq \ep^2\leq x^2$, and thus for any real number $k$ such that $0<k\ll \log x/(\log_2 x \log_3 x)$, we have
\begin{equation}\label{StandardBoundL}
L(1, \chi_d)^k\leq (c\log d)^{k}\leq  (2c\log x)^k=x^{o(1)},
\end{equation}
for some constant $c>0$, which follows from the standard bound $L(1,\chi_d)\ll \log d$. 

Let $\widetilde{\D}(x)$ be the set of discriminants $d\in \D$ such that $d\geq \sqrt{x}$, $\ep\leq x$ and $L(s,\chi_d)$ does not have a zero in the rectangle $\mathcal{R}(x)$. Then, it follows from Lemma \ref{ZeroDensity} that 
\begin{equation}\label{BoundExceptional}
|\{ d\in\D: \ep\leq x\}|- |\widetilde{\D}(x)|\ll x^{1-1/20}.
\end{equation}
Therefore, using this bound together with \eqref{StandardBoundL} we obtain
$$\sums L(1,\chi_d)^k d^{k/2}= \sum_{d\in \widetilde{\D}(x)}L(1,\chi_d)^k d^{k/2}+ O\left(x^{k+1-1/30}\right).$$
Let $y=x^{1/60}$, and $\ell=[k]+1$. Then, it follows from Proposition \ref{ShortApproxL} that for any $d\in \widetilde{\D}(x)$ we have
$$ 
 L(1,\chi_d)^k= \sum_{n=1}^{\infty}\frac{d_k(n)\chi_d(n)}{n}e^{-n/y}+O\left(\exp\left(-\frac{\log x}{150\log_2 x}\right)\right),
$$
if $k\leq \log x/(B\log_2\log_3 x)$ with a suitably large constant $B$. Hence, we derive
$$
\sums L(1,\chi_d)^k d^{k/2}= \sum_{n=1}^{\infty}\frac{d_k(n)}{n}e^{-n/y}\sum_{d\in \widetilde{\D}(x)}\chi_d(n)d^{k/2}+ O\left(x^{k+1}\exp\left(-\frac{\log x}{150\log_2 x}\right)\right).
$$
We now extend the main term of the last estimate, so as to include all discriminants $d\in \D$ with $\ep\leq x$. Using  \eqref{BoundDivisork} and   \eqref{BoundExceptional}, we deduce that 
$$ \sum_{n=1}^{\infty}\frac{d_k(n)}{n}e^{-n/y}\left(\sums\chi_d(n)d^{k/2}-\sum_{d\in \widetilde{\D}(x)}\chi_d(n)d^{k/2}\right)\ll x^{k+1-1/20}\sum_{n=1}^{\infty}\frac{d_{\ell}(n)}{n}e^{-n/y}\ll x^{k+1-1/30}.
$$
Combining this bound with Theorem \ref{AsympChar} we obtain
\begin{equation}\label{EstimateIntermediate}
\begin{aligned}
\sums L(1,\chi_d)^k d^{k/2}&= \frac{\mathcal{C}(k)}{k+1} \cdot x^{k+1}\cdot \sum_{n=1}^{\infty}\frac{d_k(n)g_k(n)}{n}e^{-n/y}\\
&+O\left(x^{k+1-1/50}\sum_{n=1}^{\infty}d_{\ell}(n)e^{-n/y}+ x^{k+1}\exp\left(-\frac{\log x}{150\log_2 x}\right)\right).
\end{aligned}
\end{equation}
To bound the error term in the last estimate, we split the sum $\sum_{n=1}^{\infty}d_{\ell}(n)e^{-n/y}$ into two parts: $n\leq y \log^2 y$ and $n> y \log^2 y$. The contribution of the first part is 
$$
 \leq  \sum_{n\leq y\log^2 y} \frac{y \log^2 y}{n} d_{\ell}(n) e^{-n/y}
 \leq y\log^2 y\sum_{n=1}^{\infty} \frac{d_{\ell}(n)}{n} e^{-n/y}
\ll y(\log 3y)^{\ell+2}\ll x^{1/59},
$$
by \eqref{BoundDivisork}, if $x$ is sufficiently large.
The remaining terms contribute
\begin{align*}
 \leq  \exp\left(-\frac{(\log y)^2}{2}\right)\sum_{n=1}^{\infty}d_{\ell}(n)e^{-n/(2y)}
 & \leq  \exp\left(-\frac{(\log y)^2}{2}\right)\left(e^{1/y}\sum_{a=1}^{\infty}e^{-a/y}\right)^{\ell}\\
 & \ll  \exp\left(-\frac{(\log y)^2}{2}\right) (2y)^{\ell} \ll \exp\left(-\frac{(\log y)^2}{4}\right),
\end{align*}
by \eqref{BoundDivisork0}. Combining these estimates and using that $\mathcal{C}(k)=O(1)$ we deduce that  
\begin{equation}\label{EstimateIntermediate}
\sums L(1,\chi_d)^k d^{k/2}= \frac{\mathcal{H}(k)}{k+1} \cdot x^{k+1}+ O\left(E_k(x)+ x^{k+1}\exp\left(-\frac{\log x}{150\log_2 x}\right)\right),
\end{equation}
where 
$$
E_k(x)= x^{k+1}\cdot \sum_{n=1}^{\infty}\frac{d_k(n)|g_k(n)|}{n}\left(1-e^{-n/y}\right).
$$
A simple computation together with the definition of $g_k(p)$ shows that for every real number $k>0$ we have
$\left|g_k(2^a)\right|\leq 1/2$ for all $a\geq 1$, and for $p>2$ and $a\geq 1$ we have
$$\left|g_k(p^a)\right|\leq \begin{cases} \frac{1}{p} &\text{ if } a \text{ is odd},\\
1 &\text{ if } a \text{ is even}.\\
\end{cases}$$
Thus, it follows that for every real number $k>0$ and positive integer $n$ we have  $|g_k(n)|\leq 1/n_0$ where $n_0$ is the squarefree part of $n$. Next, note that $1-e^{-n/y}\ll (n/y)^{\alpha}$ for all real numbers $0<\alpha\leq 1$.  Choosing $\alpha= 1/\log\log x$, and writing $n=n_0 n_1^2$ we deduce that  
\begin{align*}
E_k(x) &\ll y^{-\alpha}x^{k+1} \sum_{n=1}^{\infty} \frac{d_{\ell}(n)}{n_0 n^{1-\alpha}}\\
&\ll y^{-\alpha}x^{k+1}\sum_{n_0=1}^{\infty}
\frac{d_{\ell}(n_{0})}{n_{0}^{2-\alpha}}\sum_{n_1=1}^{\infty}\frac{d_{\ell}^{2}(n_{1})}{n_{1}^{2-2\alpha}}= y^{-\alpha} x^{k+1}\zeta(2-\alpha)^{\ell} \sum_{n=1}^{\infty}\frac{d_{\ell}^{2}(n)}{n^{2-2\alpha}},
\end{align*}
since $d_{\ell}(n_0n_1^2)\leq d_{\ell}(n_0)d_{\ell}(n_1^2)\leq  d_{\ell}(n_0)d_{\ell}(n_1)^2.$
Finally, we use the following bound, which follows from Lemma 3.3 of \cite{La1} 
\[
 \sum_{n=1}^{\infty}\frac{d_{\ell}^{2}(n)}{n^{2-2\alpha}} \leq \exp\Big(\big(2+o(1)\big) \ell\log_2 \ell \Big).
 \]
This shows that  
$ E_k(x) \ll x^{k+1}\exp\left(-\log x/(100\log_2 x)\right),$
which completes the proof.

\end{proof}
\section{The distribution of large values of $h(d)$: Proof of Theorem \ref{Distribution}}
In order to use Theorem \ref{Main} to study the distribution of large values of $h(d)$, we first need to estimate the constant $\mathcal{H}(k)$ when $k$ is large. We prove
\begin{pro}\label{LeadingConstant}
Let $k$ be a large positive real number. Then we have
$$\log \mathcal{H}(k)= k\log\log k+ k(\gamma-\log 3)+ (A_0-1) \frac{k}{\log k}+ O\left(\frac{k}{(\log k)^2}\right),$$
where $A_0$ is defined in Theorem \ref{Distribution}.
\end{pro}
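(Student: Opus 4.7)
The plan is to estimate $\log\mathcal{H}(k) = \sum_p \log\mathcal{H}_p(k)$ by splitting the primes into three regimes: $\{2,3\}$, a Mertens range $5 \leq p \leq k$, and a transition tail $p > k$. For $p = 2, 3$, Lemma \ref{PrimePart} gives $\mathcal{H}_2(k) = \tfrac12 + O((2/3)^k)$; at $p = 3$ the coefficient $\tfrac12 - \tfrac{3}{2p}$ vanishes, yielding $\mathcal{H}_3(k) = \tfrac23 + O((3/4)^k)$. So $\log(\mathcal{H}_2\mathcal{H}_3) = -\log 3$ up to exponentially small error. For $p \geq 5$ the $1 + O((p-1)^{-k})$ factor in Lemma \ref{PrimePart} sums to $O(4^{-k})$ in the logarithm. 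The task reduces to evaluating $\sum_{p \geq 5}\log G_p(k)$, where
\[
G_p(k) := \big(\tfrac12 - \tfrac{3}{2p}\big)(1-\tfrac{1}{p})^{-k} + \big(\tfrac12 - \tfrac{1}{2p}\big)(1+\tfrac{1}{p})^{-k} + \tfrac{2}{p}.
\]

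In the Mertens range $5 \leq p \leq k$: factor $G_p(k) = (1-1/p)^{-k} H_p(k)$. Since $((1-1/p)/(1+1/p))^k \leq e^{-2k/(p+1)}$ and $(1-1/p)^k \leq e^{-k/p}$, one has $H_p(k) = \tfrac12 - \tfrac{3}{2p} + O(e^{-k/p})$ for $p \leq k/\log k$. By the standard Mertens product estimate together with $-\log(1-1/2) - \log(1-1/3) = \log 3$,
\[
-k\sum_{5 \leq p \leq k}\log(1-\tfrac{1}{p}) = k\log\log k + k(\gamma - \log 3) + O(k/\log k),
\]
producing the main terms of the claimed asymptotic. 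In the transition tail $p > k$: using
\[
\tfrac12\big[(1-\tfrac1p)^{-k} + (1+\tfrac1p)^{-k}\big] = (1-\tfrac{1}{p^2})^{-k/2}\cosh(k\,\mathrm{arctanh}(1/p)) = \cosh(k/p)\big(1 + O(k/p^2)\big),
\]
and absorbing the remaining $\tfrac1p$ terms gives $\log G_p(k) = \log\cosh(k/p) + O(1/p + k/p^2)$. For $p > k(\log k)^3$ this is $O(k^2/p^2)$, summing to $O(k/\log^3 k)$.

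The $(A_0 - 1) k/\log k$ correction comes from Abel summation on the remaining terms. Applying Abel summation with $\pi(y) = \mathrm{Li}(y) + O(ye^{-c\sqrt{\log y}})$ to the $\cosh$-piece for $k < p \leq k(\log k)^3$ and substituting $t = k/y$ (so that $\log(k/t) = \log k + O(\log\log k)$ in the dominant range) converts the sum to $(k/\log k)\int_0^1 \log\cosh(t)/t^2\,dt + O(k/\log^2 k)$. Symmetrically, on the Mertens side the residual $\log H_p(k) - \log(\tfrac12 - \tfrac{3}{2p})$ together with the part of $-k\log(1-1/p)$ beyond $\log\cosh(k/p)$ assembles into a sum of the form $\sum_{p \leq k}[\log\cosh(k/p) - k/p]$ plus lower-order $1/p$-corrections; the same Abel-summation change of variables converts this to $(k/\log k)\int_1^\infty(\log\cosh(t)-t)/t^2\,dt + O(k/\log^2 k)$. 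The $-1$ in $A_0 - 1$ arises from the subleading $y/\log^2 y$ term of $\mathrm{Li}(y)$ together with the constant boundary contributions at $p = k$. Combining yields the claimed $(A_0 - 1)k/\log k + O(k/\log^2 k)$ in view of the definition \eqref{SpecialConstant} of $A_0$.

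The hard part will be the precise matching at the transition point $p \sim k$: one has to reconcile the Mertens-regime expansion $-k\log(1-1/p) = k/p + k/(2p^2) + \cdots$ with the $\cosh$-regime identity $\tfrac12[(1-1/p)^{-k} + (1+1/p)^{-k}] = (1-1/p^2)^{-k/2}\cosh(k\,\mathrm{arctanh}(1/p))$, and verify that the subleading $1/p$-corrections in $H_p(k)$ (on the small-$p$ side) together with the $y/\log^2 y$ correction in $\mathrm{Li}(y)$ (on the large-$p$ side) assemble into the explicit constant $A_0 - 1$ without double-counting and with $O(k/\log^2 k)$ error uniformly in large $k$.
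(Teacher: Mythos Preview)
Your broad strategy---Mertens for small primes, the $\cosh$ approximation for large primes, and Abel summation to convert the transition into an integral---matches the paper's. But there is a genuine gap at precisely the point you flag as the ``hard part'', and your explanation of where the $-1$ in $A_0-1$ comes from is wrong. The $-1$ is purely definitional: by \eqref{SpecialConstant} one has $A_0=\int_0^1\frac{\log\cosh t}{t^2}\,dt+\int_1^\infty\frac{\log\cosh t - t}{t^2}\,dt+1$, so if $f(t)=\log\cosh t$ for $t<1$ and $f(t)=\log\cosh t - t$ for $t\geq 1$, then $\int_0^\infty f(u)u^{-2}\,du = A_0-1$ tautologically. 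No subleading $y/\log^2 y$ term of $\mathrm{Li}$ and no boundary contribution at $p=k$ is involved. Your misattribution here, combined with the fact that you explicitly leave the matching undone, means the proposal does not actually establish the $(A_0-1)k/\log k$ term.

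The paper avoids your ``hard part'' entirely by splitting at $k^{2/3}$ rather than at $k$, creating an \emph{overlap region} $k^{2/3}<p<k^{4/3}$ in which both approximations of Lemma~\ref{PrimeSumEstimate} hold simultaneously. In that range $\log\mathcal{H}_p(k)=\log\cosh(k/p)+O(k/p^2)$ and $-k\log(1-1/p)=k/p+O(k/p^2)$; since $\log\cosh(k/p)=k/p+f(k/p)$ for $k/p\geq 1$, the two descriptions glue with total error $O(k^{2/3})$, giving in one stroke
\[
\log\mathcal{H}(k)=-k\sum_{5\leq p\leq k}\log\Big(1-\tfrac{1}{p}\Big)+\sum_{k^{2/3}<p<k^{4/3}}f\Big(\tfrac{k}{p}\Big)+O(k^{2/3}).
\]
The second sum is then $(k/\log k)(A_0-1)+O(k/(\log k)^2)$ by PNT and the substitution $u=k/t$, and Mertens (with PNT error) handles the first. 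By contrast, your split at $p=k$ forces you to reconcile the $-\log 2$ hidden in $\log H_p(k)$ (contributing $-(\log 2)\pi(k)\sim -(\log 2)k/\log k$) with the $-\log 2$ hidden in $\log\cosh(k/p)-k/p=-\log 2+\log(1+e^{-2k/p})$, while also controlling the $e^{-2k/p}$ terms near $p\sim k$; this can be made to work, but your sketch neither carries it out nor correctly identifies what needs to cancel.
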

Recall that $\mathcal{H}(k)=\prod_{p} \mathcal{H}_p(k)$. Furthermore, it follows from  Lemma \ref{PrimePart} that $\mathcal{H}_2(k)=O(1)$ and $\mathcal{H}_3(k)=O(1)$ for all $k>1$. Hence, we only need to estimate $\mathcal{H}_p(k)$ for primes $p\geq 5$.

\begin{lem}\label{PrimeSumEstimate}
Let $k$ be a large positive real number, and $p$ be a prime number. Then we have
$$\log \mathcal{H}_p(k)= 
\begin{cases} -k\cdot \log\left(1-\frac{1}{p}\right)+O(1) & \text{ if } 5\leq p\leq k^{2/3}, \\ \log\cosh\left(\frac{k}{p}\right)+O\left(\frac{k}{p^2}\right) & \text{ if } p> k^{2/3}.
\end{cases}
$$
\end{lem}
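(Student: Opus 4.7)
The plan is to work from the explicit formula in Lemma \ref{PrimePart}: for $p \geq 5$ it reads
$$\mathcal{H}_p(k) = \left[\Bigl(\tfrac{1}{2}-\tfrac{3}{2p}\Bigr)(1-1/p)^{-k} + \Bigl(\tfrac{1}{2}-\tfrac{1}{2p}\Bigr)(1+1/p)^{-k} + \frac{2}{p}\right]\bigl(1 + O((p-1)^{-k})\bigr).$$
The outer factor is exponentially small since $(p-1)^{-k}\leq 4^{-k}$, and can be discarded throughout.

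For $5 \leq p \leq k^{2/3}$, I would show that the first summand dominates exponentially. Indeed, the ratio of the second summand to the first is at most $\bigl((p-1)/(p+1)\bigr)^k \leq \exp(-2k/(p+1)) \leq \exp(-k^{1/3})$, and the ratio of $2/p$ to the first is $O(p^{-1}\exp(-k^{1/3}))$. Consequently $\mathcal{H}_p(k) = \bigl(\tfrac{1}{2}-\tfrac{3}{2p}\bigr)(1-1/p)^{-k}(1+o(1))$, and since $\log\bigl(\tfrac{1}{2}-\tfrac{3}{2p}\bigr)$ is bounded for $p \geq 5$, taking logarithms yields the first case.

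For $p > k^{2/3}$ I would set $t := k/p$ and expand
$$(1 \mp 1/p)^{-k} = e^{\pm t}\Bigl(1 + \tfrac{k}{2p^2} + O\bigl(\tfrac{k}{p^3}+\tfrac{k^2}{p^4}\bigr)\Bigr),$$
which is valid because $p > k^{2/3}$ makes $k/p^2 \leq k^{-1/3} = o(1)$. The central computation is the algebraic identity
$$\Bigl(\tfrac{1}{2}-\tfrac{3}{2p}\Bigr) e^t + \Bigl(\tfrac{1}{2}-\tfrac{1}{2p}\Bigr) e^{-t} + \frac{2}{p} = \cosh(t)\Bigl(1 - \frac{1}{p}\Bigr) - \frac{e^t - 2}{p},$$
which upon dividing by $\cosh(t)$ reads $1 - \frac{1}{p}\bigl(1 + (e^t-2)/\cosh(t)\bigr)$. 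The factor $1 + (e^t-2)/\cosh(t)$ is bounded by $3$ for all $t \geq 0$ and vanishes linearly at $t = 0$, with expansion $t + O(t^2)$. This yields $1 + O(\min(t,1)/p)$, and $\min(t,1)/p \leq k/p^2$ holds throughout $p > k^{2/3}$ (if $p \leq k$, then $\min(t,1) = 1 \leq k/p$; if $p > k$, then $\min(t,1) = t = k/p$). The $\tfrac{k}{2p^2}$-corrections from the expansion of $(1\mp 1/p)^{-k}$ contribute another $O(k/p^2)$ after division by $\cosh(t)$ (using $e^{\pm t}/\cosh(t)\leq 2$). Thus $\mathcal{H}_p(k)/\cosh(k/p) = 1 + O(k/p^2)$, and taking logarithms (valid since $k/p^2 \to 0$) gives the second case.

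The hard part will be extracting the $O(k/p^2)$ bound in the large-prime case rather than the naive $O(1/p)$: the key point is that all the $1/p$-order contributions from the three summands conspire into the single prefactor $1 + (e^t-2)/\cosh(t)$, which vanishes linearly at $t = 0$ and thus contributes $O(t/p) = O(k/p^2)$ precisely in the problematic range $p \gg k$.
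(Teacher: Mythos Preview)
Your proposal is correct and follows essentially the same route as the paper. The small-prime case is identical; in the large-prime case the paper writes the bracket as $(1-3/p)\cosh(t)+p^{-1}e^{-t}+2/p$ and bounds the discrepancy $-2(\cosh t-1)-\sinh t$ via $\cosh t-1\le\sinh t\ll t\cosh t$, which is algebraically equivalent to your computation with $1+(e^{t}-2)/\cosh t=O(\min(t,1))$.
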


\begin{proof} First, if $5\leq p\leq k^{2/3}$ then it follows from Lemma \ref{PrimePart} that
$$ \mathcal{H}_p(k)=\left(\frac{1}{2} -\frac{3}{2p}\right)\left(1-\frac 1p\right)^{-k} \left(1+O\left(\exp\left(-k^{1/3}\right)\right)\right).
$$
Taking the logarithm of both sides implies the desired estimate in this case. Now, if 
$p>k^{2/3}$ then by Lemma \ref{PrimePart} we have
\begin{align*}
\mathcal{H}_p(k)&= \left(\left(\frac12-\frac{3}{2p}\right) e^{k/p}+ \left(\frac12-\frac{1}{2p}\right)e^{-k/p}+\frac{2}{p}\right)\left(1+ O\left(\frac{k}{p^2}\right)\right)\\
&= \left(\left(1-\frac{3}{p}\right)\cosh\left(\frac{k}{p}\right) +\frac{1}{p} e^{-k/p}+\frac{2}{p}\right)\left(1+O\left(\frac{k}{p^2}\right)\right)\\
&= \cosh\left(\frac{k}{p}\right) \left(1+O\left(\frac{k}{p^2}\right)\right),
\end{align*}
since $\cosh(t)-1\leq \sinh(t)\ll t\cosh(t)$ for $t>0$. This completes the proof.
\end{proof}

Define 
$$ f(t):= \begin{cases} \log \cosh(t) & \text{ if } 0\leq t <1, \\
 \log \cosh(t)- t  & \text{ if } t \geq 1.\end{cases}
$$
Then, we have 
\begin{equation}\label{logcosh} 
f(t):= \begin{cases} t^2/2+O(t^4) &  \text{ if } 0\leq t <1, \\
 O(1) & \text{ if } t \geq 1.\end{cases}
\end{equation}
We are now ready to prove Proposition \ref{LeadingConstant}. 
\begin{proof}[Proof of Proposition \ref{LeadingConstant}]
First, recall that $\mathcal{H}_2(k)=O(1)$ and $\mathcal{H}_3(k)=O(1)$ for all $k>1$, by Lemma \ref{PrimePart}.  Therefore, it follows from  Lemma \ref{PrimeSumEstimate} and equation \eqref{logcosh} that
\begin{equation}\label{SumPrimesEstimate}
\begin{aligned}
\log \mathcal{H}(k) &=-k\sum_{5\leq p\leq k^{2/3}}\log\left(1-\frac{1}{p}\right)+\sum_{p>k^{2/3}}\log\cosh\left(\frac{k}{p}\right)+ O\left(k^{2/3}\right)\\
&=-k\sum_{5\leq p\leq k}\log\left(1-\frac{1}{p}\right)+\sum_{k^{2/3}<p<k^{4/3}} f\left(\frac{k}{p}\right)+ O\left(k^{2/3}\right),
\end{aligned}
\end{equation}
since $f(t)=t^2/2+O(t^4)$ for $0\leq t <1.$
Now, using the prime number theorem in the form $\pi(t)-\text{Li}(t)\ll t/(\log t)^3$, together with partial summation, we obtain
\begin{equation}\label{PartialSummation1}
\begin{aligned}
\sum_{k^{2/3}<p<k^{4/3}}f\left(\frac{k}{p}\right) &=\int_{k^{2/3}}^{k^{4/3}} f\left(\frac{k}{t}\right)\frac{dt}{\log t}+O\left(\frac{k}{(\log k)^2}\right)\\
&=\frac{k}{\log k}\int_{k^{-1/3}}^{k^{1/3}}\frac{f(u)}{u^2}du+O\left(\frac{k}{(\log k)^2}\right),
\end{aligned}
\end{equation}
by a change of variables $u=k/t$, and since $\int_0^{\infty}(f(u)(\log u)/u^2)du<\infty$. Extending the integral in the right hand side of this estimate gives
\begin{equation}\label{PartialSummation2}
 \int_{k^{-1/3}}^{k^{1/3}}\frac{f(u)}{u^2}du= \int_0^{\infty} \frac{f(u)}{u^2}du +O\left(k^{-1/3}\right).
\end{equation}
by \eqref{logcosh}. The result follows upon combining \eqref{SumPrimesEstimate}, \eqref{PartialSummation1} and \eqref{PartialSummation2}, together with the following estimate which follows from the prime number theorem
$$
-\sum_{5\leq p\leq k} \log \left(1-\frac1p\right)= \log\log k+\gamma-\log 3 +O\left(\frac{1}{(\log k)^2}\right).
$$ 

\end{proof}
We end this section by proving Theorem \ref{Distribution}. To shorten our notation, we let $\D(x)$ be the set of $d\in \D$ with $\ep\leq x$. We also let $\mathcal{N}_x(\tau)$ be the proportion of discriminants $d\in \D(x)$ such that 
$$h(d)\geq \frac{e^{\gamma}}{3} \frac{x}{\log x}\cdot \tau.$$

\begin{proof}[Proof of Theorem \ref{Distribution}]
First, observe that for every real number $k>1$ we have
\begin{align*}
k\int_0^{\infty} t^{k-1} \N_x(t)dt&= \frac{1}{|\D(x)|}\sum_{d\in \D(x)}\int_0^{3e^{-\gamma}h(d) (\log x)/x} k t^{k-1} dt\\
& = \left(3 e^{-\gamma} \frac{\log x}{x}\right)^k \frac{1}{|\D(x)|}\sum_{d\in \D(x)} h(d)^k.
\end{align*} 
Therefore, it follows from \eqref{SarnakAsymp} together with Corollary \ref{Main2} and Proposition \ref{LeadingConstant} that for $1\ll k\leq \log x/(B\log_2 x\log_3 x)$, we have
\begin{equation}\label{EstimatePowerN}
\int_0^{\infty} t^{k-1} \N_x(t)dt= (\log k)^k \exp\left(\frac{k}{\log k} \left(A_0-1+O\left(\frac{1}{\log k}\right)\right)\right).
\end{equation}
Define $\kappa:= e^{\tau-A_0}$ so that $\tau=\log \kappa+A_0$. Also, let $K= \kappa e^{\delta}$ where $\delta>0$ is a small parameter to be chosen later. Then, it follows from \eqref{EstimatePowerN} that 
\begin{align*}
\int_{\tau+\delta}^{\infty} t^{\kappa-1} \N_x(t)dt &\leq (\tau +\delta)^{\kappa (1-e^{\delta})}\int_0^{\infty} t^{K-1} \N_x(t)dt\\
&= (\tau +\delta)^{\kappa (1-e^{\delta})} (\log\kappa +\delta)^{\kappa e^{\delta}} \exp\left(\frac{\kappa e^{\delta}}{\log \kappa}\left(A_0-1+O\left(\frac{1}{\log \kappa}\right)\right)\right)\\
&= (\log \kappa)^{\kappa} \exp\left(\frac{\kappa}{\log \kappa} \left(A_0-1 +(\delta+1-e^{\delta}) +O\left(\frac{1}{\log \kappa}\right)\right)\right),
\end{align*}
by the definition of $\kappa$. Choosing $\delta= C_0/\sqrt{\log k}$ for a suitably large constant $C_0$ implies
$$
\int_{\tau+\delta}^{\infty} t^{\kappa-1} \N_x(t)dt \leq  \left(\int_{0}^{\infty} t^{\kappa-1} \N_x(t)dt\right) \exp\left(-\frac{\kappa}{\log \kappa^2}\right). 
$$
A similar argument shows that
$$
\int_{0}^{\tau-\delta} t^{\kappa-1} \N_x(t)dt \leq  \left(\int_{0}^{\infty} t^{\kappa-1} \N_x(t)dt\right) \exp\left(-\frac{\kappa}{\log \kappa^2}\right). 
$$
Combining these bounds with \eqref{EstimatePowerN} gives
\begin{equation}\label{CuttingTails}
\int_{\tau-\delta}^{\tau+\delta} t^{\kappa-1} \N_x(t)dt= (\log \kappa)^{\kappa} \exp\left(\frac{\kappa}{\log \kappa} \left(A_0-1+O\left(\frac{1}{\log \kappa}\right)\right)\right).
\end{equation}
Furthermore, since $\N_x(t)$ is non-increasing as a function of $t$ we can bound the above integral as follows
$$ 
\tau^{\kappa} \exp\left(O\left(\frac{\delta\kappa}{\tau}\right)\right) \N_x(\tau+\delta)
\leq \int_{\tau-\delta}^{\tau+\delta} t^{\kappa-1} \N_x(t)dt\leq  \tau^{\kappa} \exp\left(O\left(\frac{\delta\kappa}{\tau}\right)\right) \N_x(\tau-\delta).
$$ 
Inserting these bounds in \eqref{CuttingTails}, and using the definition of $\kappa$ we obtain
$$  \N_x(\tau+\delta) \leq  \exp\left(-\frac{e^{\tau-A_0}}{\tau} \left(1+O\left(\delta\right)\right)\right) \leq  \N_x(\tau-\delta),
$$
and thus 
$$ \N_x(\tau)= \exp\left(-\frac{e^{\tau-A_0}}{\tau} \left(1+O\left(\frac{1}{\sqrt{\tau}}\right)\right)\right).$$
as desired.
\end{proof}

\section*{Acknowledgements} I would like to thank the anonymous referee for many valuable suggestions which helped strengthen the results of the paper. I would also like to thank Gergely Harcos for a useful comment.

\end{document}